	\renewcommand{\Re}{\text{Re}}			
	\renewcommand{\Im}{\text{Im}}			
    \renewcommand{\tilde}{\widetilde}
    \renewcommand{\theta}{\vartheta} 
    \renewcommand{\phi}{\varphi} 
    \renewcommand{\epsilon}{\varepsilon}
	\newcommand{\lebesgue}		{\ensuremath{\lambda\mkern-8mu\lambda}}
	\newcommand				{\eins}			{\mathbbm{1}}   
	\newcommand				{\norm}[1]		{\left\lVert#1\right\rVert}
	\newcommand				{\abs}[1]		{\left\lvert#1\right\rvert}
	\DeclareMathOperator	{\IC}			{\mathbb{C}}
	\DeclareMathOperator	{\IE}			{\mathbb{E}} 
	\DeclareMathOperator	{\IN}			{\mathbb{N}}
	\DeclareMathOperator	{\IP}			{\mathbb{P}}
	\DeclareMathOperator	{\IR}			{\mathbb{R}}
	\DeclareMathOperator	{\IZ}			{\mathbb{Z}}
	\DeclareMathOperator	{\supp}			{supp}
\theoremstyle{plain}
\newtheorem{thm}			{Theorem}
\newtheorem{lem}	[thm]	{Lemma}
\newtheorem{cor}	[thm]	{Corollary}
\newtheorem{prop}	[thm]	{Proposition}
\theoremstyle{definition}
\newtheorem{defi}	[thm]	{Definition}
\newtheorem{bem}	[thm]	{Remark}
\begin{document}
\title[Rate of convergence for products of random matrices]{Rate of convergence for products of independent non-Hermitian random matrices}
\author{Jonas Jalowy}
 \address{Jonas Jalowy,  Institute for Mathematical Stochastics, University of M\"unster}
 \email{jjalowy@wwu.de}
 \date{\today}
 \subjclass[2010]{60B20 (Primary); 41A25 (Secondary)}
 \keywords{products of non-Hermitian random matrices, Ginibre matrices, Meijer-G function, saddlepoint approximation, logarithmic potential, circular law, rate of convergence}
 \thanks{Supported by the German Research Foundation (DFG) through the IRTG 2235}
\begin{abstract}
We study the rate of convergence of the empirical spectral distribution of products of independent non-Hermitian random matrices to the power of the Circular Law. The distance to the deterministic limit distribution will be measured in terms of a uniform Kolmogorov-like distance. First, we prove that for products of Ginibre matrices, the optimal rate is given by $\mathcal O (1/\sqrt n)$, which is attained with overwhelming probability up to a logarithmic correction. Avoiding the edge, the rate of convergence of the mean empirical spectral distribution is even faster. Second, we show that also products of matrices with independent entries attain this optimal rate in the bulk up to a logarithmic factor. In the case of Ginibre matrices, we apply a saddlepoint approximation to a double contour integral representation of the density and in the case of matrices with independent entries we make use of techniques from local laws.
\end{abstract}
 
\maketitle

%%%%%%%%%%%%%%%%%%%%% 

\section{Introduction}\label{sec:Intro}
\noindent The Circular Law states that the empirical spectral distribution of a non-Hermitian random matrix with i.i.d.\ entries converges to the uniform distribution on the complex disc as the size of the matrix tends to infinity. Interestingly, for the product of $m$ independent matrices of such type, the limit distribution will be the $m$-th power of the Circular Law. Here we investigate the question: \emph{How fast does it converge?} The case $m=1$, corresponding to the Circular Law, was studied in \cite{GJ18Rate} already.

We consider the product 
\begin{align*}
\mathbf X=\frac{1}{\sqrt{n^m}}\prod_{q=1}^m X^{(q)}
\end{align*}
of $m$ independent random matrices $X^{(1)},\dots,X^{(m)}$, each of size $n\times n$. For fixed $m\in\IN$, the asymptotic in $n\to\infty$ will be of interest. Its \emph{empirical spectral distribution} is given by
\begin{align*}
 \mu^m_n=\frac{1}{n}\sum_{j=1}^n \delta_{\lambda_j (\mathbf X)},
\end{align*}
where $\delta_\lambda$ are Dirac measures in the eigenvalues $\lambda_j$ of the matrix $\mathbf X$. In this note we are interested in two different classes of random matrices $X^{(q)}$ that appear in the product. 
\begin{defi}
\begin{enumerate}[(i)]
 \item A (complex) \emph{Ginibre matrix} $X$ is a complex non-Hermitian random matrix with independent complex Gaussian entries $X_{ij}\sim\mathcal N_{\IC}(0,1)$.
 \item A non-Hermitian random $n\times n$-matrix $X$ is said to have \emph{independent entries} if $X_{ij}$ are independent complex or real random variables, and in the complex case we additionally assume $\Re X_{ij}$ and $\Im X_{ij}$ to be independent.
\end{enumerate}
\end{defi}
If $X^{(1)},\dots,X^{(m)}$ have independent entries satisfying $\IE X_{ij}^{(q)}=0$ and $\IE\lvert X_{ij}^{(q)}\rvert^2=1$, then the empirical spectral distribution converges weakly to a deterministic probability measure on the complex plane as the matrix size grows. We denote by $\lebesgue$ the $2$-dimensional Lebesgue measure, by $\Rightarrow$ weak convergence of measures and $B_r=B_r(0)$ shall be the open ball of radius $r>0$ centered at $z=0$. In \cite {GT10Products}, G\"otze and Tikhomirov showed that as $n\to\infty$, $\IP$- almost surely we have
\begin{align}\label{eq:LimitProduct}
 \mu_n^{m}\Rightarrow\mu^m_\infty\text{, where }d\mu^m_\infty(z)=\frac {\abs z ^{2/m-2} }{\pi m}\eins_{B_1}(z)d\lebesgue(z)   
\end{align}
is the $m$-th power of the uniform distribution $\mu_\infty=\mu_\infty^1$ on the complex disc, see also \cite{OS11}. The Gaussian case has been treated in \cite{BJW10Products, AB12}, more general models can be found in \cite{KT15, GKT15, bordenave2011, AI15, IpsenKieburg}, for the convergence of the singular values see \cite{AGT10} and furthermore for local results we refer to \cite{nemish,KOV18,N18,GNT17Local,CosProdLin}. 

For $m=1$, we retrieve the well known \emph{Circular Law} $\mu_n=\mu_n^1\Rightarrow\mu_\infty$. In the case of Ginibre matrices this has been discovered much earlier in \cite{Gin65}. We are interested in the rate of convergence, more precisely in a uniform Kolmogorov-type distance over balls 
\begin{align*}
D(\mu_n^m,\mu_\infty^m):=\sup\abs{\mu_n^m(B_R(z_0))-\mu^m_\infty (B_R(z_0)) } 
\end{align*}
as $n\to\infty$, where the supremum runs over all balls $B_R(z_0)\subseteq\IC$. In the sequel, we will also consider the supremum over certain families of balls $B$. Convergence in the distance $D$ coincides with weak convergence in the case of an absolutely continuous limit distribution, see \cite[Lemma A.1]{GJ18Rate}. Using the rotational symmetry of the Circular Law $\mu_\infty$ and the mean empirical spectral distribution $\bar\mu _n =\IE \mu_n$ of the Ginibre ensemble, the following optimal rate of $\mu_n$ for $m=1$ was shown in \cite{GJ18Rate}.

\begin{lem}\label{lem:GinibreRate}
The mean empirical spectral distribution $\bar\mu _n =\IE \mu_n$ of the Ginibre ensemble satisfies
\begin{align}
D(\bar\mu_n,\mu_\infty)\sim \frac 1{\sqrt{2\pi n }}
\end{align}
and for any fixed $\epsilon>0$
\begin{align}\label{eq:GinibreRate2}
\sup_{\substack{B_R(z_0)\subseteq \IC\setminus B_{1+\epsilon}\\ \text{or }B_R(z_0)\subseteq B_{1-\epsilon}}}\abs{\bar\mu_n(B_R(z_0))-\mu_\infty (B_R(z_0)) }\lesssim e^{-n\epsilon^2}.
\end{align}
\end{lem}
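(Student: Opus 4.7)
The plan is to exploit the explicit determinantal structure of the (rescaled) Ginibre ensemble. Its mean eigenvalue density is rotationally symmetric with closed form
\[
\bar\rho_n(z) \;=\; \frac{1}{\pi}\, Q(n,\, n\abs{z}^2),\qquad Q(n,t) := e^{-t}\sum_{k=0}^{n-1}\frac{t^k}{k!} \;=\; \IP(T_n > t),
\]
where $T_n$ is a sum of $n$ i.i.d.\ $\mathrm{Exp}(1)$ variables. Passing to polar coordinates and substituting $t = ns^2$ yields, for concentric balls, the clean expression
\[
\bar\mu_n(B_r) - \mu_\infty(B_r) \;=\;
\begin{cases}
\displaystyle -\frac{1}{n}\int_0^{nr^2} \IP(T_n \le t)\,dt,&\; r\le 1,\\[4pt]
\displaystyle -\frac{1}{n}\int_{nr^2}^\infty \IP(T_n > t)\,dt,&\; r\ge 1.
\end{cases}
\]

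From this representation the second assertion \eqref{eq:GinibreRate2} is immediate: Chernoff's inequality yields $\IP(T_n \le t) \le e^{-n I(t/n)}$ with Cram\'er rate function $I(x)=x-1-\log x$ satisfying $I(1\pm\epsilon)\gtrsim \epsilon^2$, so the radial density difference is exponentially small away from the unit circle, and integrating over any ball contained in $B_{1-\epsilon}$ or in $\IC\setminus B_{1+\epsilon}$ inherits this decay. For the sharp asymptotic I specialize to $r=1$ and invoke the Berry--Esseen theorem for $(T_n - n)/\sqrt n$: uniformly in $x$, $\IP(T_n \le n + x\sqrt n) = \Phi(x) + O(n^{-1/2})$, hence
\[
n\,\bigl\lvert \bar\mu_n(B_1) - \mu_\infty(B_1)\bigr\rvert \;=\; \int_0^n \IP(T_n \le t)\,dt \;\sim\; \sqrt n\int_{-\infty}^{0}\Phi(x)\,dx \;=\; \frac{\sqrt n}{\sqrt{2\pi}},
\]
using $\int_{-\infty}^0\Phi(x)\,dx = \IE[(-Z)^+] = 1/\sqrt{2\pi}$ for $Z\sim\mathcal N(0,1)$. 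Monotonicity of the two displayed integrals in $r$ shows that the radial ``Kolmogorov'' quantity $\|F\|_\infty := \sup_r\abs{\bar\mu_n(B_r) - \mu_\infty(B_r)}$ is in fact attained at $r=1$, which already supplies the lower bound $D(\bar\mu_n,\mu_\infty) \ge (1+o(1))/\sqrt{2\pi n}$.

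The matching \emph{upper bound} over all balls $B_R(z_0)$ with $z_0\ne 0$ is the main point, and the plan is to reduce it to the concentric case by exploiting the radial symmetry of $\nu := \bar\mu_n - \mu_\infty$. Writing $f(s) := \bar\rho_n(s)-\rho_\infty(s)$, $F(s) := \nu(B_s(0))$ and letting $\theta(s;R,a)\in[0,2\pi]$ be the angular length of the arc $\{\abs{z}=s\}\cap B_R(z_0)$ with $a:=\abs{z_0}$ (explicit via the law of cosines on its support), polar coordinates followed by integration by parts give
\[
\nu(B_R(z_0)) \;=\; \int_0^\infty f(s)\,s\,\theta(s;R,a)\,ds \;=\; -\frac{1}{2\pi}\int_0^\infty F(s)\,\partial_s\theta(s;R,a)\,ds,
\]
the boundary terms vanishing since $F(0)=0=F(\infty)$ ($\nu$ has total mass zero). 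A short calculus check distinguishing the case $R\ge a$ (where $\theta$ decreases monotonically from $2\pi$ to $0$, which is the extremal configuration) from $R<a$ (where $\theta$ forms a hat of height $2\arcsin(R/a)<\pi$) shows that the total variation of $\theta(\cdot\,;R,a)$ in $s$ is at most $2\pi$. Combined with $\|F\|_\infty = (1+o(1))/\sqrt{2\pi n}$ this gives $\abs{\nu(B_R(z_0))} \le \|F\|_\infty \sim 1/\sqrt{2\pi n}$ uniformly in $R,z_0$, closing the asymptotic. The hardest step is arranging for the \emph{sharp} constant rather than merely $\Theta(n^{-1/2})$ (requiring a uniform Berry--Esseen rate) together with the $\mathrm{TV}\le 2\pi$ reduction; both are, however, essentially one-dimensional computations once the right setup is in place.
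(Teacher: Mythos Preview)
The paper does not supply its own proof of this lemma; it is imported from \cite{GJ18Rate} and described only as ``an elementary calculation'' using the rotational symmetry of $\bar\mu_n$ and $\mu_\infty$. Your argument is precisely of this type and is essentially correct: the Gamma--tail representation of $\bar\mu_n(B_r)-\mu_\infty(B_r)$, the Chernoff bound for \eqref{eq:GinibreRate2} (the pointwise density difference is $\le\pi^{-1}e^{-nI(|z|^2)}$ on the relevant region, and integrating over any admissible ball costs at most an $\epsilon$-dependent constant), the monotonicity showing the radial supremum is attained at $r=1$, and the integration-by-parts reduction of general balls to the radial case via $\mathrm{TV}\big(\theta(\cdot\,;R,a)\big)\le 2\pi$ are all valid. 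The last device is a clean, self-contained way to pass from centred to arbitrary balls without losing the sharp constant, and it is exactly the kind of ``monotonicity argument of the radial part'' the paper alludes to after the statement of Theorem~\ref{thm:ProductMeanRate}.

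There is one genuine gap in the sharp-constant step. Applying Berry--Esseen \emph{uniformly} on $x\in(-\sqrt n,0)$ and integrating produces an error $\sqrt n\cdot O(n^{-1/2})=O(1)$, which is of the same size as the main term $\int_{-\sqrt n}^0\Phi(x)\,dx\to 1/\sqrt{2\pi}$; as literally written you obtain only the order $\Theta(n^{-1/2})$, not the constant. The fix is immediate: either split the range at some $M_n\to\infty$ with $M_n=o(\sqrt n)$, using the Chernoff bound $\IP(T_n\le n+x\sqrt n)\lesssim e^{-x^2/2}$ to kill the tail $(-\sqrt n,-M_n)$ while Berry--Esseen on $(-M_n,0)$ contributes $O(M_n/\sqrt n)=o(1)$; or, more cleanly, rewrite $\int_0^n\IP(T_n\le t)\,dt=\IE[(n-T_n)^+]$ and conclude from the CLT together with uniform integrability (granted by $\Var T_n=n$) that $\IE[(n-T_n)^+]/\sqrt n\to\IE[(-Z)^+]=1/\sqrt{2\pi}$. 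Either route closes the argument. (A minor slip: the Cram\'er rate enters as $I((1\pm\epsilon)^2)$ rather than $I(1\pm\epsilon)$, but one checks $I((1\pm\epsilon)^2)\ge\epsilon^2$ directly, so the stated exponent $e^{-n\epsilon^2}$ is indeed reached.)
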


Here and in the sequel we denote asymptotic equivalence by $\sim$. We write $\lesssim$, if an inequality holds up to a $n$-independent constant $c>0$ and $A\asymp B$ if $c\abs B\le\abs A\le C\abs B$ for some constants $0<c<C$. These constants $c,C$ may differ in each occurrence. Moreover we abbreviate $\log^ab=(\log b)^a$.

\subsection{The optimal rate of convergence for products of Ginibre matrices}
It is natural to ask for a generalization of Lemma \ref{lem:GinibreRate} to products of $m\geq 1$ independent Ginibre matrices. Notably, our analogous result shows that the optimal rate of convergence does not depend on the number $m$.

\begin{thm}\label{thm:ProductMeanRate}
The mean empirical spectral distribution $\bar\mu^m _n =\IE \mu^m_n$ of the Ginibre ensemble satisfies
\begin{align}\label{eq:ProductMeanRate1}
 \sup_{R>0}\abs{\bar\mu_n^m(B_R)-\mu^m_\infty (B_R) }\asymp \frac{1}{\sqrt{nm}}. 
\end{align}
The following more detailed estimates hold as long as the boundary of the complex disk is avoided
\begin{align}\label{eq:ProductMeanRate2}
 \sup_{R<1-\tfrac m2\sqrt{\log n/ n}}\abs{\bar\mu_n^m(B_R)-\mu^m_\infty (B_R) }&\lesssim\frac{\log^{3/2} n}{n}
 \end{align}
 and uniformly in $ R>1+\sqrt{\log n /n}$ we have
 \begin{align}\label{eq:ProductMeanRate3}
 \abs{\bar\mu_n^m(B_R)-\mu^m_\infty (B_R) }&\lesssim  \exp\left[-n\min((R-1)^2,1)/3\right].
\end{align}
\end{thm}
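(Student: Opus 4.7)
The plan is to exploit two structural features of the Ginibre product ensemble: rotational symmetry of $\bar\mu_n^m$, which reduces the problem to the radial cumulative distribution, and the Kostlan-type representation (Akemann--Burda)
\[
\{|\lambda_k|^2\}_{k=1}^n \;\stackrel{d}{=}\; \Bigl\{n^{-m}\prod_{q=1}^m \Gamma_{k,q}\Bigr\}_{k=1}^n,\qquad \Gamma_{k,q}\sim \mathrm{Gamma}(k,1)\text{ independent.}
\]
This gives $\bar\mu_n^m(B_R)=\tfrac1n\sum_{k=1}^n \IP(W_k\le 2\log R)$ with $W_k:=\sum_q\log\Gamma_{k,q}-m\log n$; its moment generating function $\IE e^{tW_k}=e^{-tm\log n}(\Gamma(k+t)/\Gamma(k))^m$ inverts through a Mellin--Barnes contour into the double contour integral representation of the density alluded to in the abstract.

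A saddle-point analysis around the saddle $s_*(k,R)$ defined by $\psi(s_*+k)=2(\log R)/m+\log n$ yields the uniform Gaussian approximation
\[
\IP(W_k\le 2\log R)=\Phi\!\left(\frac{2\log R-m(\psi(k)-\log n)}{\sqrt{m\psi'(k)}}\right)+(\text{Laplace error})
\]
in the moderate-deviation range, together with exponential Cram\'er tails outside. Substituting $\psi(k)=\log k+O(1/k)$ and $\psi'(k)=1/k+O(1/k^2)$, and converting the $k$-sum into an integral via Euler--Maclaurin, reduces the problem to
\[
\bar\mu_n^m(B_R) \;=\; \int_0^1 \Phi\!\left(\sqrt{tn/m}\,(2\log R-m\log t)\right)dt + (\text{error}),
\]
whose integrand has a sharp transition of width $\sqrt{m/n}$ around the saddle $t_*=R^{2/m}$; by symmetry of $\Phi$ the leading stationary-phase contribution integrates to $\min(R^{2/m},1)$, so the three estimates correspond to three positions of $t_*$ relative to the endpoint $t=1$.

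At the edge $R=1$ the saddle $t_*$ coincides with the boundary and only half of the transition lies in $[0,1]$. Substituting $k=n-j$ with $\psi(n-j)-\log n\approx -j/n$ and $\psi'(n-j)\approx 1/n$ gives
\[
1-\bar\mu_n^m(B_1)=\tfrac1n\sum_{j=0}^{n-1}(1-\Phi(j\sqrt{m/n}))+o(\tfrac{1}{\sqrt{nm}})\asymp\frac{1}{\sqrt{nm}}\int_0^\infty(1-\Phi(y))\,dy\asymp\frac{1}{\sqrt{nm}},
\]
which is the lower bound in \eqref{eq:ProductMeanRate1}; the same argument, perturbed in $R-1$, handles the transition window $|R-1|\lesssim\sqrt{\log n/n}$. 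For \eqref{eq:ProductMeanRate2} the saddle $t_*$ lies at distance $\gtrsim\sqrt{\log n/n}$ from $t=1$: splitting the integration at $t_*+\sqrt{\log n/n}$, the right piece is bounded by its Gaussian tail $\lesssim n^{-m/2}$, while on the left piece the symmetry $\Phi(-x)+\Phi(x)=1$ cancels the leading stationary-phase contribution and the subleading Edgeworth term combined with the moments of the cut-off Gaussian window produces the residual $\log^{3/2}n/n$. For \eqref{eq:ProductMeanRate3}, every term is in the Cram\'er regime; optimizing the explicit moment generating function above in $t$ yields the uniform bound $\IP(W_k\ge 2\log R)\lesssim \exp(-n\min((R-1)^2,1)/3)$, which when averaged in $k$ gives the claim. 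The upper bound in \eqref{eq:ProductMeanRate1} follows by combining these three regimes.

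The main obstacle is making the saddle-point approximation uniform in both $k$ (where the variance $m\psi'(k)\asymp m/k$ degenerates for small $k$ and Berry--Esseen is too weak) and $R$ (as the saddle sweeps across $[0,1]$), and stitching the Gaussian and Cram\'er regimes smoothly. The small-$k$ range, where the Laplace error is crude, is absorbed into the doubly exponentially small Cram\'er bound derived directly from the moment generating function, and contributes negligibly after averaging. The most delicate point is the bulk bound \eqref{eq:ProductMeanRate2}: controlling the interaction between the transition window of width $\sqrt{m/n}$ and the endpoint $t=1$ forces the extra $\sqrt{\log n}$ factor, and obtaining it with the stated power $\log^{3/2}n$ rather than the naive $\log n$ requires at least two orders of Edgeworth correction in the saddle-point expansion.
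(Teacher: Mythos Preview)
Your approach is sound and genuinely different from the paper's. Both routes ultimately perform a saddle-point analysis of the same object, but they are organised quite differently.

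The paper never invokes the Kostlan-type decomposition. Instead it starts from the explicit density $\rho_n^m$ in terms of the Meijer $G^{m,0}_{0,m}$ weight, rewrites the truncated exponential sum over $k$ as a single contour integral in a variable $t$ via the residue identity $\sum_{k=0}^{n-1}\tfrac{(n^m|z|^2)^k}{(k!)^m}=\tfrac1{2\pi i}\oint_\gamma \tfrac{(n^m|z|^2)^{t-1}}{\Gamma(t)^m}\cot(\pi t)\,dt$, and combines this with the Mellin--Barnes integral for $G^{m,0}_{0,m}$ to obtain a \emph{double} contour integral for $\bar\mu_n^m(B_R)$. Stirling's formula then reduces the exponent to $nmR^{2/m}(F(s)-F(t))$ with $F(z)=z\log z-z$ and a single saddle at $z=1$; the three regimes in the theorem correspond to whether the rectangular $t$-contour (which has corners at $3/4$ and $n+1/4$, rescaled by $nR^{2/m}$) intersects a box of side $\delta_n=\sqrt{\log n/(nR^{2/m})}$ around the saddle. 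The $\tfrac{\lfloor nR^{2/m}\rfloor}{n}$ term arises as the number of cotangent residues swept over when the vertical $s$-line is shifted to $\Re s=\lfloor nR^{2/m}\rfloor+\tfrac12$.

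Your route decomposes first: each cotangent residue at $t=k$ in the paper's integral is precisely your term $\IP(W_k\le 2\log R)$, and your Laplace inversion in the MGF variable is the paper's $s$-integral restricted to that residue. What you gain is a more probabilistic presentation (Edgeworth and Cram\'er language, Euler--Maclaurin for the $k$-sum), and the endpoint heuristic $t_*=R^{2/m}$ versus $t=1$ is very transparent. What the paper gains is that keeping both contours together lets the cancellation in the bulk regime \eqref{eq:ProductMeanRate2} appear as an exact antisymmetry of the two horizontal pieces of $\gamma_{loc}$, without having to push the Edgeworth expansion to second order; the residual $\log^{3/2}n/n$ comes directly from the $\mathcal O(\delta_n^3)$ Taylor error in $F$ together with the Stirling remainder, not from a higher Edgeworth term. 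Your remark that ``obtaining $\log^{3/2}n$ rather than $\log n$ requires at least two orders of Edgeworth'' is therefore slightly misdiagnosed: in the paper's organisation the extra half-power of $\log n$ is a saddle-window effect ($\delta_n^3\cdot nR^{2/m}=\sqrt{\log^3 n/(nR^{2/m})}$), and you should expect the same bookkeeping if you track the cubic term in your tilted-CGF expansion rather than the formal Edgeworth series.
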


Theorem \ref{thm:ProductMeanRate} provides the optimal rate of convergence, which however is faster inside and much faster outside of the bulk. The precise constants of the upper and lower bound of \eqref{eq:ProductMeanRate1} can be chosen to be $C=\sqrt{\pi}/\sqrt{2}$ and $c=1/(\sqrt{2\pi })$, coinciding with Lemma \ref{lem:GinibreRate}. The constants in \eqref{eq:ProductMeanRate2} and \eqref{eq:ProductMeanRate3} do depend on $m$. We will also see that the maximal distance in \eqref{eq:ProductMeanRate1} is attained at $R=1$. 

The restriction of Theorem \ref{thm:ProductMeanRate} to centered balls does not affect its character of quantifying weak convergence: Since both $\bar\mu^m _n$ and $\mu_\infty ^m$ have radial symmetric Lebesgue densities, it is easy to see that \eqref{eq:ProductMeanRate1} already implies weak convergence $\bar\mu_n^m\Rightarrow\mu_\infty^m$. In order to remove this restriction one may exploit monotonicity arguments of the radial part of $\bar\mu^m_n$.

While the proof of Lemma \ref{lem:GinibreRate} is an elementary calculation, the proof of Theorem \ref{thm:ProductMeanRate} is more involved and relies on a saddle-point method of a double contour integral representation for the density of $\bar\mu^m_n$, which we will give in Section \ref{sec:Ginibre}. An idea of the proof is given after the contours are defined, see Figure \ref{fig:Contours}. Figure \ref{fig:EVdensity} illustrates the statements of our main results.

\begin{figure}[h]
 \includegraphics[width=.47\textwidth]{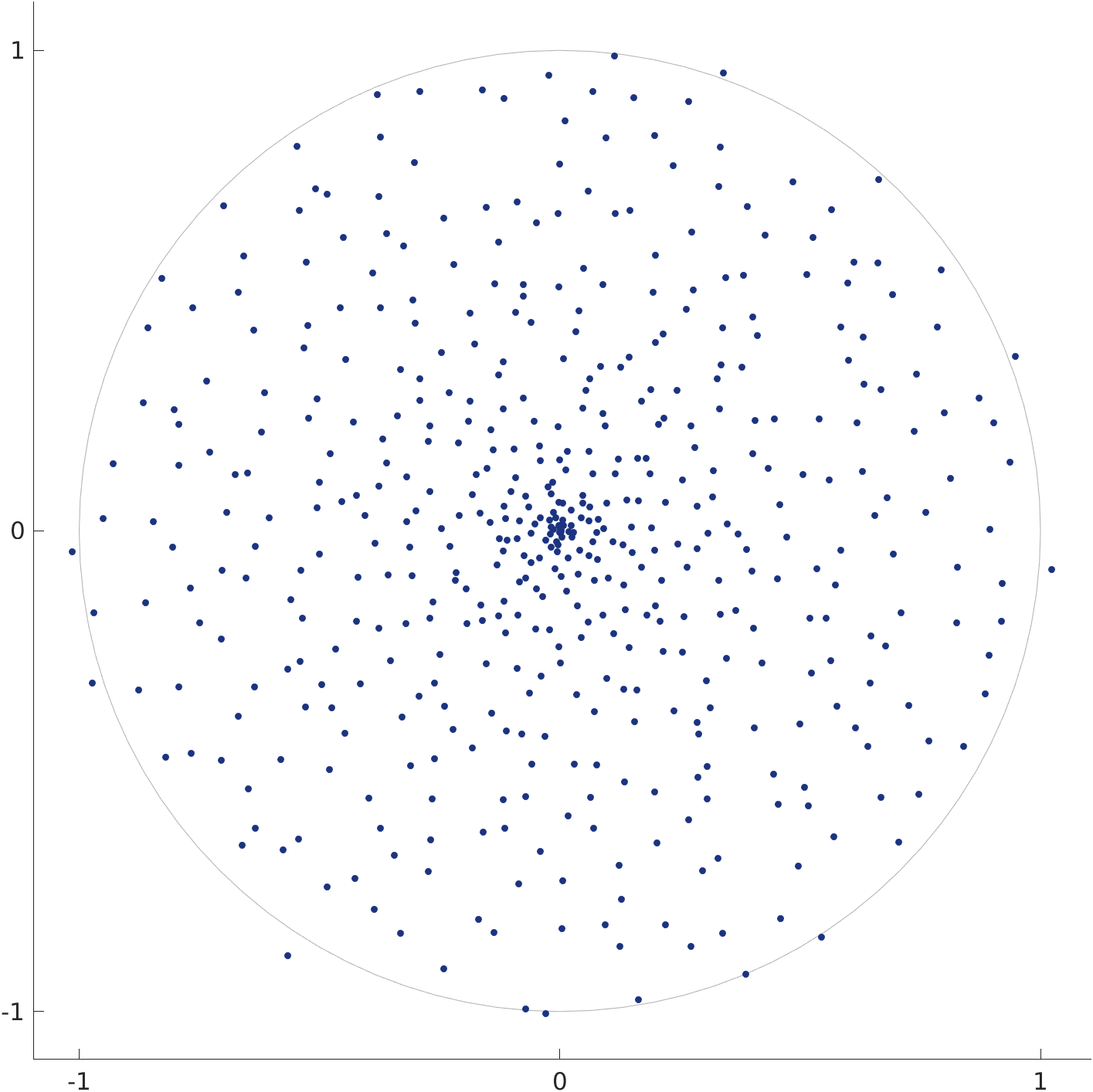}\hfill\includegraphics[width=.47\textwidth]{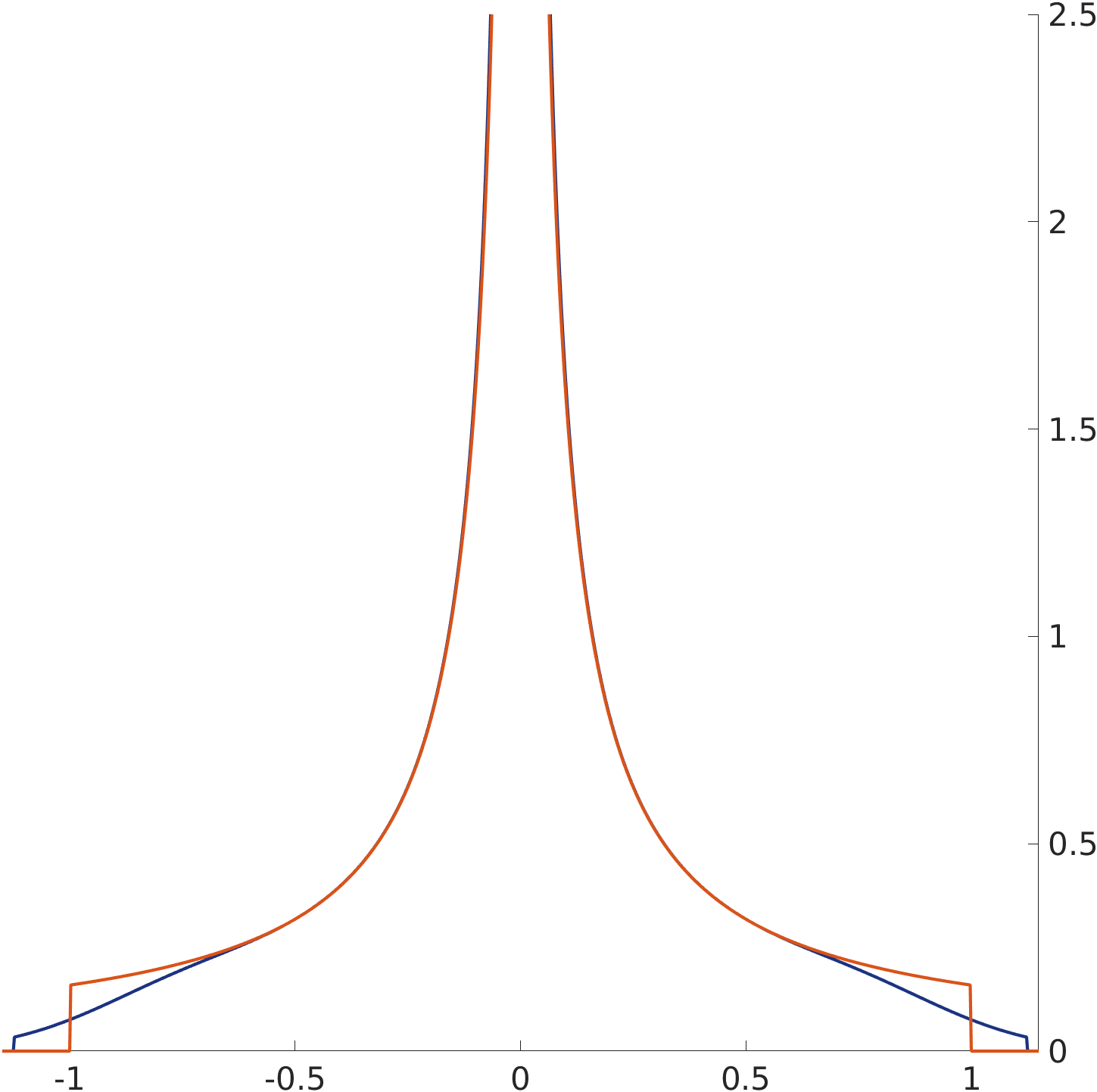}
   \caption{The empirical spectral distribution of the product $\mathbf X$ of $m=2$ Ginibre matrices\newline
  Left: The eigenvalues of a sample for $n=500$ and the unit ball $B_1(0)$ as reference. Theorem \ref{thm:rateofconv} below shows that gaps (like one can see at the top) and clusters do not significantly differ from the limit distribution.\newline
  Right: The radial part of the densities of $\bar\mu^m_n$ for $n=15$ in blue and of the limit distribution $\mu^m_\infty$ in orange. Clearly the rate of convergence in the bulk is faster than close to the edge, illustrating the statement of Theorem \ref{thm:ProductMeanRate}.}\label{fig:EVdensity}
\end{figure}

Pointwise convergence of the density $\rho_n^m$ of $\bar\mu_n$ or $\bar\mu_n^m$ has been also discussed in \cite{AB12,AC18, TV15uni}. In particular Akemann and Burda describe the asymptotic
\begin{align*}
\rho_n^m(z)= \frac{\abs z ^{2/m-2}}{\pi m}\tfrac 12 \text{erfc}\left(\sqrt{\frac{mn}{2}}\frac{\abs z^{2/m}-1}{\abs z ^{1/m}}\right)+o(1)
\end{align*}
for fixed $z$ without specifying the error. Aside from the error $o(1)$, the appearance of erfc$(\cdot)$ also hints at exponential convergence, like in \eqref{eq:GinibreRate2} and \eqref{eq:ProductMeanRate3}. Note that Akemann and Cikovic mention an algebraic rate of convergence for the fixed trace ensemble and conclude that the exponential rate of convergence for Ginibre matrices is rather special. 

Chafa\"\i, Hardy and Ma\"\i da studied invariant $\beta$-ensembles with external potential $V$ instead of matrices with independent entries, see \cite{CHM16}. 
Their result implies a rate of convergence to the limiting measure with density $c\Delta V$ of order $\mathcal O(\sqrt{ \log n / n})$ with respect to the bounded Lipschitz metric and the $1$-Wasserstein distance. Similar questions in this context of $\log$-gases, but for the non-uniform variant of $D$ (the discrepancy) have been addressed in \cite{Serfaty}. Moreover, for determinantal point processes, the fluctuation around the (deterministic) rate has been studied in \cite{FL20}.

\subsection{Non-averaged rate of convergence for products of Ginibre matrices}
Note that we cannot expect an exponentially fast rate of convergence like in Lemma \ref{lem:GinibreRate} for the non-averaged empirical spectral distribution $\mu^m_n$, because it is still affected by the individual eigenvalue fluctuations. In particular the rough lower bound $D(\mu_n^m,\mu_\infty^m)\gtrsim 1/n$ follows from placing a disk of radius $cn^{-1/2}$ contained in $B_1(0)$ such that it does not cover any eigenvalue.
Heuristically, the typical distance of $n$ uniformly distributed eigenvalues in the Circular Law is $n ^{-1/2}$, therefore one may vary $B_R(z_0)$ up to a magnitude of $n^{-1/2}$ without covering a new eigenvalue. Hence we should also expect the non-averaged distance $D(\mu_n^m,\mu_\infty^m)$ to be of order $\mathcal O (n ^{-1/2})$ with overwhelming probability.

\begin{thm}\label{thm:ProductRate}
For products of $m$ Ginibre matrices and any $\epsilon,Q>0$ there exists a constant $c>0$ such that
\begin{align*}
\IP\Big(\sup_{R>0}\abs{\mu_n^m(B_{R} )-\mu_\infty^m( B_{R} )}\le c\sqrt{\frac{\log n}{n}}\Big)\ge 1-n^{-Q}.
\end{align*}
\end{thm}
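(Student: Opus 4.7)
The plan is to combine the deterministic estimate of Theorem \ref{thm:ProductMeanRate} with a concentration bound for the random counting function $N(R) := n\mu_n^m(B_R)$ around its mean $\bar N(R) := \IE N(R) = n\bar\mu_n^m(B_R)$. Since \eqref{eq:ProductMeanRate1} gives $\abs{\bar N(R) - n\mu_\infty^m(B_R)} \lesssim \sqrt n$ uniformly in $R > 0$, the triangle inequality reduces the theorem to proving
\begin{align*}
\IP\Bigl(\sup_{R>0}\abs{N(R)-\bar N(R)} \ge c\sqrt{n\log n}\Bigr) \le n^{-Q}
\end{align*}
for some constant $c$ depending on $m$ and $Q$.

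The decisive structural ingredient is the Kostlan-type identity for products of complex Ginibre matrices: since the determinantal eigenvalue density is rotationally invariant with Meijer $G$-kernel (see \cite{AB12, IpsenKieburg}) and is of polynomial-ensemble form, the unordered tuple of squared moduli $\{\abs{\lambda_j}^2\}_{j=1}^n$ is equal in distribution to an unordered tuple of \emph{independent} random variables $Y_1, \ldots, Y_n$, with each $Y_j$ a product of $m$ independent Gamma distributed variables of appropriate shape parameters. Consequently $N(R) \stackrel{d}{=} \sum_{j=1}^n \eins_{Y_j\le R^2}$ is a sum of independent Bernoulli indicators, and Hoeffding's inequality yields, for every fixed $R > 0$,
\begin{align*}
\IP\bigl(\abs{N(R)-\bar N(R)} \ge c\sqrt{n\log n}\bigr) \le 2\exp(-2c^2\log n) = 2n^{-2c^2}.
\end{align*}

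To pass to a uniform bound in $R$, I would discretize using the quantiles of $\bar N$. Because $\rho_n^m$ is a positive continuous density, $\bar N\colon[0,\infty)\to[0,n)$ is strictly increasing and continuous, so setting $R_k := \bar N^{-1}(k)$ for $k = 0, 1, \ldots, n-1$ is well-defined. A union bound over these $n$ points together with the Hoeffding estimate gives
\begin{align*}
\IP\Bigl(\max_{0\le k\le n-1}\abs{N(R_k) - \bar N(R_k)} \ge c\sqrt{n\log n}\Bigr) \le 2n^{1-2c^2},
\end{align*}
which is $\le n^{-Q}$ once $c$ is chosen large enough. The crucial feature of this grid is that $\bar N(R_{k+1}) - \bar N(R_k) = 1$, so the monotonicity of $N$ and $\bar N$ yields $\abs{N(R)-\bar N(R)} \le \max_k \abs{N(R_k) - \bar N(R_k)} + 1$ for every $R \in [0, R_{n-1}]$. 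The residual range $R > R_{n-1}$ is handled by the trivial bound $\abs{N(R) - \bar N(R)} \le 1 + \abs{N(R_{n-1}) - \bar N(R_{n-1})}$, which follows from $\bar N(R) \in [n-1, n]$ and $N(R_{n-1}) \le N(R) \le n$.

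The principal obstacle is the Kostlan-type factorization of the modulus distribution. It is precisely this special feature of the Ginibre model that linearizes the concentration problem into a sum of independent indicators; nothing of the sort is available for matrices with general independent entries, which is why the analogous statement in that setting requires the entirely different, and substantially more involved, local-law machinery alluded to in the abstract. Given the independence, the remainder is a routine Hoeffding-plus-discretization-plus-monotonicity argument.
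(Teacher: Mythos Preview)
Your argument is correct and follows the same overall strategy as the paper's proof: use Theorem~\ref{thm:ProductMeanRate} to reduce to concentration of $N(R)$ about its mean, represent $N(R)$ as a sum of independent Bernoulli variables, apply a concentration inequality at each fixed radius, then discretize and take a union bound. The implementation differs in three minor respects. First, you invoke the Kostlan-type factorization of the moduli to obtain independence, whereas the paper uses the general fact (from \cite[Corollary~4.2.24]{AGZ}) that the count of a determinantal process in a fixed region is distributed as a sum of independent Bernoullis whose parameters are the eigenvalues of the restricted kernel operator $\mathcal K_R$; both are valid here, and yours gives slightly more (the moduli themselves are independent) at the cost of being specific to rotationally invariant polynomial ensembles. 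Second, you use Hoeffding's inequality while the paper uses Bernstein's; since the variance is only bounded trivially by $n$ in the paper anyway, the two give the same outcome. Third, your discretization via the $n$ quantiles $R_k=\bar N^{-1}(k)$ together with monotonicity is more economical than the paper's grid of $n^{\lceil m/4\rceil}$ equidistant radii; the paper needs extra points because $\mu_\infty^m(B_{r_{k+1}}\setminus B_{r_k})$ must be made small near the origin where the limiting density blows up, while your quantile choice sidesteps this issue automatically.
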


This rate coincides exactly with the Wasserstein rate obtained in \cite{CHM16}. The proof makes use of the determinantal structure of the eigenvalues of Gaussian matrices in order to apply Bernstein's inequality and will be given in the end of Section \ref{sec:Ginibre}.
%The logarithmic factor is expected to be non-optimal and might be removed by controlling the variance term in \eqref{eq:Bernstein} below, but this will not be pursued here.

\subsection{Rate of convergence for products of non-Gaussian matrices}

Based on the ideas of \cite{GJ18Rate}, we will also prove a rate of convergence result for products of matrices with independent entries.
\begin{defi}[Condition (C)]
We say $\mathbf X=X^{(1)}\cdots X^{(m)}/\sqrt{n^m}$ satisfies condition (C) if the matrices $X^{(q)}$, $q=1,\dots, m$, have jointly independent entries and satisfy 
$$\abs{\IE X_{ij}^{(q)}}\le n^{-1-\epsilon}\text{ and }\abs{1-\IE \abs{X_{ij}^{(q)}}^2}\le n^{-1-\epsilon}$$ for some $\epsilon>0$ independent of $n$ and furthermore 
 $$\max_{i,j,q,n} \IE \abs{X_{ij}^{(q)}}^{4+\delta}<\infty$$ for some $\delta>0$.
\end{defi}

In Section \ref{sec:iidMatrices}, we will show

\begin{thm}\label{thm:rateofconv}
If condition (C) holds, then for every $m\in\IN$ and $\tau,Q>0$ there exists a constant $c>0$ such that
\begin{align*}
\IP\Big(\sup_{B}\abs{(\mu^m_n-\mu^m_\infty)(B)}\leq c\hspace{1pt} h_m(n)\Big)\geq 1-n^{-Q},
\end{align*}
where the supremum runs over balls $B \subseteq B_{1-\tau}\cup B_{1+\tau}^c$ and the asymptotic error is given by
\begin{align*}
h_m(n)=\begin{cases}
        n^{-1/2}\log^2n&\text{ for }m=1,\\
        n^{-1/2}\log^3n&\text{ for }m=2,\\
        n^{-2/(m+2)}\log^{8/(m+2)}n&\text{ for }m\ge 3.\\
        \end{cases}
\end{align*}
\end{thm}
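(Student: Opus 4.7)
The plan is to follow the Hermitization approach used for $m=1$ in \cite{GJ18Rate} and adapt it to the product setting. By Girko's formula, the logarithmic potential of $\mu_n^m$ can be written as
\[
U_{\mu_n^m}(z):=-\int\log\abs{w-z}\,d\mu_n^m(w)=-\int_0^\infty\log x\,d\nu_n^z(x),
\]
where $\nu_n^z$ denotes the empirical singular value distribution of $\mathbf X-z$. A smoothing-type inequality in the spirit of \cite[Lemma A.1]{GJ18Rate}, combined with the regularity of the density of $\mu_\infty^m$ on $B_{1-\tau}\cup B_{1+\tau}^c$, reduces the bound on $\sup_B\abs{(\mu_n^m-\mu_\infty^m)(B)}$ to an $L^\infty$ control in $z$ of the potential difference $U_{\mu_n^m}(z)-U_{\mu_\infty^m}(z)$ on a sufficiently fine grid inside the admissible region, at the cost of logarithmic factors coming from the smoothing scale.

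\textbf{Splitting the logarithmic potential.} I would rewrite
\[
U_{\mu_n^m}(z)-U_{\mu_\infty^m}(z)=\int_0^\infty\log x\,d(\nu_\infty^z-\nu_n^z)(x)
\]
and split the integral at a cutoff $n^{-\alpha_m}$. For the bulk part $x\ge n^{-\alpha_m}$, I would invoke a local law for the singular values of a product of independent non-Hermitian matrices, such as those provided by \cite{GNT17Local,nemish,KOV18}: up to logarithmic factors, $\nu_n^z$ matches $\nu_\infty^z$ on scales down to $1/n$, so the corresponding contribution to the integral is of order $n^{-1}\log^Cn$ with overwhelming probability. This step is essentially a transcription of the $m=1$ argument of \cite{GJ18Rate}, now using the product local law and the fourth-moment and smallness assumptions from condition (C).

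\textbf{Small singular values and main obstacle.} The contribution of $x<n^{-\alpha_m}$ is the crux and the source of the $m$-dependence in $h_m(n)$. One bounds $\#\{j:s_j(\mathbf X-z)\le n^{-\alpha_m}\}$ by the local law, while $\int_0^{n^{-\alpha_m}}\log x\,d\nu_n^z$ is handled by a layer-cake argument together with a polynomial lower tail estimate for $s_n(\mathbf X-z)$. The available tail estimates for products of independent entries (see e.g.\ \cite{GT10Products,nemish,CosProdLin}) degrade polynomially as $m$ grows, which forces the truncation scale $\alpha_m$ to depend on $m$; optimising the resulting trade-off yields $\alpha_m=\tfrac{1}{2}$ for $m=1,2$ (the bulk term still dominates) and $\alpha_m=2/(m+2)$ for $m\ge 3$ (the tail dominates), matching precisely the exponents in $h_m(n)$. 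The main obstacle is this quantitative control of the small singular values of a product matrix under only the mild moment and mean/variance conditions in (C); once it is in place, a union bound over a polynomial net of centres $z_0$ and radii $R$, together with the uniformity of the smoothing step and the $n^{-Q}$ exceptional probability absorbed from the local law, yields the claim.
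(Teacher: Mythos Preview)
Your proposal has a genuine gap: you have misidentified the source of the $m$-dependence in $h_m(n)$. In the paper, the exponent $2/(m+2)$ does \emph{not} arise from degrading small-singular-value tails of the product matrix. The logarithmic potential concentration (Proposition~\ref{prop:ConcLogPot}) is of order $\log^4 n/n$ uniformly in $m$; the small-singular-value contribution is absorbed there and plays no role in the final optimisation. Instead, the $m$-dependence comes from the smoothing step: one mollifies $\eins_{B_R(z_0)}$ at scale $1/a$, and the resulting approximation error is $\mu_\infty^m$ of an annulus of width $\mathcal O(1/a)$. Because the density of $\mu_\infty^m$ behaves like $\abs{z}^{2/m-2}$ near the origin, this annulus mass is of order $a^{-2/m}$ for $m\ge 3$ (and $a^{-1}\log a$ for $m=2$). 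Balancing this against $a\log^4 n/n$, which comes from $\lVert\Delta f\rVert_{L^1}\lesssim a$ times the potential concentration, yields $a=n^{m/(m+2)}\log^{-4m/(m+2)}n$ and hence $h_m(n)$. Corollary~\ref{cor:rateofconv} makes this mechanism explicit: once balls are kept away from the origin, the density is bounded and the rate reverts to $n^{-1/2}\log^2 n$ for \emph{every} $m$.

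There is also a structural mismatch in your Hermitization. You propose to work with singular values of $\mathbf X-z$ and the smoothing inequality of \cite{GJ18Rate}. The paper explicitly rules this out: the available local law and least-singular-value input concern the \emph{linearization} $\mathbf W-z$, not $\mathbf X-z$, so one only controls the logarithmic potential $U_n$ of $\mathbf W$'s empirical spectral distribution. The link to $\mu_n^m$ is then made through the identity \eqref{eq:LocalLawIdentity} with $\tilde f(z)=f(z^m)$, and the integral $\int\Delta\tilde f\,(U_n-U_\infty)\,d\lebesgue$ is discretised via a random-shift grid (Lemma~\ref{lem:gridapprox}) rather than a Monte Carlo net, precisely because the smoothing inequality of \cite{GJ18Rate} is unavailable in this setting. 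Your trade-off between a cutoff $n^{-\alpha_m}$ and singular-value tails therefore targets the wrong object and would not reproduce the stated $h_m(n)$ even if the cited tail bounds were available in the required form.
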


In the proof of Theorem \ref{thm:rateofconv} we will see that the $m$-dependent term is only visible for balls touching the origin and else only the error $h_1$ remains for all $m$. To make the statement more comprehensible when comparing with Theorem \ref{thm:ProductMeanRate}, we also state the following result.

\begin{cor}\label{cor:rateofconv}
If condition (C) holds, then for every $\tau,Q>0$ we have
\begin{align*}
\IP\Big(\sup_{B}\abs{(\mu^m_n-\mu^m_\infty)(B)}\lesssim\frac{\log^2 n}{\sqrt n}\Big)\geq 1-n^{-Q},
\end{align*}
where the supremum runs over all balls $B$ such that $\partial B_R(z_0)\subseteq B_{1+\tau}^c\cup B_{1-\tau}\setminus B_\tau$ avoids the edge and the origin.
\end{cor}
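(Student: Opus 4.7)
The plan is to reduce the corollary to Theorem \ref{thm:rateofconv} via a short geometric case analysis. The hypothesis on $\partial B=\partial B_R(z_0)$ forces $\abs{\abs{z_0}-R}>\tau$ together with $\abs{z_0}+R<1-\tau$ or $\abs{\abs{z_0}-R}>1+\tau$, and every admissible ball falls into one of four classes:
(A) $B\subseteq B_{1-\tau}\setminus B_\tau$,
(B) $B_\tau\subseteq B\subseteq B_{1-\tau}$,
(C) $B\subseteq B_{1+\tau}^c$,
(D) $B\supseteq B_{1+\tau}$.

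Classes (A) and (C) already lie in the scope of Theorem \ref{thm:rateofconv}. As flagged in the paragraph preceding the corollary, the $m$-dependent factor in $h_m(n)$ arises in the proof only through the singularity of the density $\abs{z}^{2/m-2}/(\pi m)$ of $\mu^m_\infty$ at $0$, entering the smoothing step through a quantity of the shape $\sup_{w\in\partial B}\abs{w}^{2/m-2}$. Under the additional constraint $\partial B\cap B_\tau=\emptyset$ shared by all of (A)--(C), this supremum is bounded by the $n$-independent constant $\tau^{2/m-2}$, and the proof instead delivers the improved rate $h_1(n)=n^{-1/2}\log^2 n$. Class (D) is not covered by Theorem \ref{thm:rateofconv} but is trivial: $B\supseteq\supp\mu^m_\infty$ gives $\mu^m_\infty(B)=1$, and under Condition~(C) the standard absence of outlier eigenvalues -- $\max_j\abs{\lambda_j(\mathbf X)}\leq 1+\tau$ with probability $\geq 1-n^{-Q}$ -- forces $\mu^m_n(B)=1$ on the same event.

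The main obstacle is class (B), where $B$ itself contains the singularity $0$ of $\mu^m_\infty$ even though $\partial B$ does not. I would split $\eins_B=\eins_{B_\tau}+\eins_{B\setminus B_\tau}$ and bound the two pieces separately: the centered disk $B_\tau\subseteq B_{1-\tau}$ is a ball covered by the refined form of Theorem \ref{thm:rateofconv} just described and thus receives rate $h_1(n)$, while the annular set $B\setminus B_\tau$ has boundary $\partial B\cup\partial B_\tau\subseteq B_{1-\tau}\setminus B_\tau$ and, by the same boundary-only smoothing estimate from \cite{GJ18Rate} as used in (A) and (C), again receives rate $h_1(n)$. The delicate step is to verify that the smoothing inequality underlying Theorem \ref{thm:rateofconv} admits not only balls but also such annular regions -- which it should, since that inequality depends on the region only through its boundary and the total $\mu^m_\infty$-mass of the region, both of which are harmless here.
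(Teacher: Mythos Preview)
Your case analysis is a detour that misses the paper's much shorter argument. The paper simply observes that the proof of Theorem~\ref{thm:rateofconv} uses the ball $B$ only through the location of $\partial B$: the support of $\Delta\tilde f_k$ lies near the preimage of $\partial(B\cap V)$ under $z\mapsto z^m$ (this is what makes Proposition~\ref{prop:ConcLogPot} applicable at the relevant grid points), and the only $m$-dependent step is the annulus estimate \eqref{eq:MaxMeasure}, which measures a thin neighborhood of $\partial B$ under $\mu_\infty^m$. Once $\partial B\cap B_\tau=\emptyset$, the density of $\mu_\infty^m$ is bounded on that thin annulus, so \eqref{eq:MaxMeasure} becomes $\mathcal O(1/a)$ as in the case $m=1$, and the rest of the proof is untouched. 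This single observation covers your classes (A)--(D) simultaneously; no splitting or outlier argument is needed.

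In particular, your ``main obstacle'' (B) is not an obstacle at all: the fact that $0\in B$ is irrelevant, because the interior of $B$ never appears in the estimate --- only the thin annulus near $\partial B$ does, and that annulus avoids $B_{\tau/2}$ for $a$ large. Your splitting $B=B_\tau\cup(B\setminus B_\tau)$ and the proposed extension of the smoothing machinery to annular regions are therefore superfluous.

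Your handling of class (D) also introduces a genuine gap. You invoke ``$\max_j\lvert\lambda_j(\mathbf X)\rvert\le 1+\tau$ with probability $\ge 1-n^{-Q}$'' under Condition~(C), but Condition~(C) only assumes $4+\delta$ moments, and a no-outliers statement with arbitrarily fast polynomial probability is not standard under such weak moment assumptions. The paper's route avoids this entirely: for $B\supseteq B_{1+\tau}$ the annulus in \eqref{eq:MaxMeasure} lies in $B_{1+\tau}^c$ and hence has $\mu_\infty^m$-measure zero, while $\partial B\subseteq B_{1+\tau}^c$ keeps the support of $\Delta\tilde f_k$ away from the unit circle, so the proof of Theorem~\ref{thm:rateofconv} applies verbatim.
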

 We already know from the previous discussion that the optimal rate is given by $\mathcal O (1/\sqrt n)$. Corollary \ref{cor:rateofconv} shows that this rate is (almost) universal in the sense that it continues to hold for matrices with independent entries, if edge and origin are avoided. A weaker rate of convergence for $m=1$ has already been established in \cite[\S 2.2]{GJ18Rate}, where a comparison with similar results can be found as well. We would like to point out a subtle difference between Theorem \ref{thm:rateofconv} and the Local Law in \cite{GNT17Local}; the latter compares an integral over a smooth function with the limiting distribution on a shrinking support for a \emph{fixed} point $z_0$, while the former allows to choose the ``worst ball`` $B_R(z_0)$ depending on the random sample of eigenvalues.
 
\section{Product of Ginibre matrices}\label{sec:Ginibre}

We start with the following double contour integral representation for the density of $\mu^m_n$ that is essential for Theorem \ref{thm:ProductMeanRate}.

\begin{lem}\label{lem:DoubleContour}
The density of $\bar\mu^m _n$ satisfies
\begin{align*}
 \rho^m_n(z)=\frac 1{n(2\pi i)^2 } \oint_{\gamma}\int_{\frac12-i\infty}^{\frac12+i\infty}\left(\frac{\Gamma(s)}{\Gamma(t)}\right)^m n^{m(t-s)}\abs z ^{2(t-s-1)}\cot(\pi t)dsdt,
\end{align*}
where $\gamma$ is any closed contour that encircles the numbers $1,\dots,n$ counter clockwise and no natural number greater than $n$.
\end{lem}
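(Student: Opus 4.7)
The plan is to derive the representation by combining the known formula for the mean density $\bar\rho^m_n$ as a finite sum with two integral representations: a Mellin--Barnes integral for the Meijer-G weight of the product ensemble, and a residue--cotangent formula for the finite sum over eigenvalue indices.

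First, I would use the determinantal structure of the product ensemble. The eigenvalues of $\mathbf X$ form a polynomial ensemble whose weight on $\abs z^2$ is the Meijer-G function $w_m = G^{m,0}_{0,m}$, with orthogonal monomials $z^k$ of squared $L^2$-norm $\pi\Gamma(k+1)^m$; see, e.g., \cite{AB12, IpsenKieburg}. Reading off the diagonal of the correlation kernel and rescaling by $\sqrt{n^m}$ to move the spectrum into the unit disk yields
$$\bar\rho^m_n(z) = \frac{1}{n\pi}\sum_{k=1}^n \frac{n^{mk}\abs z^{2(k-1)}}{\Gamma(k)^m}\,w_m(n^m\abs z^2),$$
and the weight admits the Mellin--Barnes representation
$$w_m(x) = \frac{1}{2\pi i}\int_{1/2-i\infty}^{1/2+i\infty}\Gamma(s)^m x^{-s}\,ds,\qquad x>0.$$
Two quick sanity checks: $w_1(x)=e^{-x}$ recovers the classical Ginibre density, and the Mellin identity $\int_0^\infty x^{s-1}w_m(x)\,dx=\Gamma(s)^m$ guarantees that $\bar\rho^m_n$ integrates to one.

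Second, substituting the integral for $w_m$ rewrites the $k$-th summand as
$$\frac{1}{2\pi i}\int_{1/2-i\infty}^{1/2+i\infty}\biggl(\frac{\Gamma(s)}{\Gamma(k)}\biggr)^m n^{m(k-s)}\abs z^{2(k-s-1)}\,ds,$$
and I would then replace the sum $\sum_{k=1}^n$ by a contour integral using the standard residue identity
$$\sum_{k=1}^n f(k) = \frac{1}{2i}\oint_\gamma f(t)\cot(\pi t)\,dt,$$
valid whenever $f$ is analytic inside $\gamma$, since $\pi\cot(\pi t)$ has simple poles at every integer with residue $1$. The inner integrand is entire in $t$ (as $1/\Gamma(t)^m$ is entire and the $s$-integral converges for every fixed $t$), so Fubini delivers precisely the asserted formula; a short constant check confirms $\tfrac{1}{n\pi}\cdot\tfrac{1}{2i}\cdot\tfrac{1}{2\pi i}=\tfrac{1}{n(2\pi i)^2}$.

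The main technical point will be justifying Fubini and the convergence of the vertical $s$-integral uniformly in $t$ along $\gamma$. All such exchanges reduce to Stirling's estimate $\abs{\Gamma(1/2+iv)}\lesssim e^{-\pi\abs v/2}$ on the critical line, which yields $\abs{\Gamma(s)^m}\lesssim e^{-m\pi\abs{\Im s}/2}$ and hence absolute integrability of the integrand, uniformly in $t$ on the compact contour $\gamma$. Once this is in place the remaining manipulations are routine residue calculus, and I do not anticipate any conceptual difficulty beyond careful bookkeeping of the rescaling powers of $n$.
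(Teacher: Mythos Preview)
Your proposal is correct and follows essentially the same route as the paper: start from the known sum representation of $\rho^m_n$ with Meijer-G weight (the paper quotes this as \eqref{eq:DensityProducts}), insert the Mellin--Barnes integral for $G^{m,0}_{0,m}$ along $\Re s=1/2$, and then convert the finite sum into a contour integral via the residues of $\cot(\pi t)$. The only addition in your write-up is the explicit Fubini/Stirling justification, which the paper leaves implicit.
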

In \cite{KZ14SV}, a similar double contour integral representation for the correlation kernel of the singular values of $\mathbf{X}$ was derived. This was used in \cite{Liu} to prove bulk universality for singular values of products of independent Ginibre matrices. In general, double contour integrals like in Lemma \ref{lem:DoubleContour} above appear regularly in the theory of products of random matrices, e.g. \cite{KZ14SV, FW, KKS}.

In the sequel we will make use of the \emph{Meijer G-function}, which for $\xi\in\IC\setminus \{0\}$ is defined as the Mellin inverse of products of Gamma functions
\begin{align}\label{eq:MeijerGdef}
 G^{m,n}_{p,q}\left(\genfrac{}{}{0pt}{}{a_1,\dots,a_p}{b_1,\dots,b_q}\Big\vert \xi\right)=\frac{1}{2\pi i}\int_L\frac{\prod_{j=1}^m\Gamma(b_j-t)\prod_{j=1}^n\Gamma(1-a_j+t)}{\prod_{j=m+1}^q\Gamma(1-b_j+t)\prod_{j=n+1}^p\Gamma(a_j-t)}\xi^tdt,
\end{align}
where $0\le m\le q$, $0\le n \le p$, $a_k-b_j\not\in \IN$ for $k=1,\dots,n$ and $j=1,\dots ,m$.
The contour $L$ goes from $-i\infty$ to $i\infty$, but can be chosen arbitrarily as long as the poles of $\Gamma (b_j-t)$ are on the right hand side of the path and the poles of $\Gamma(1-a_j-t)$ are on the left hand side. Since the Gamma function is the Mellin transform of the exponential function, we have for instance $ G^{1,0}_{0,1}\big(\genfrac{}{}{0pt}{}{-}{0}\big\vert \xi ^2\big)=\exp(-\xi^2)$.

In particular the density of $\bar\mu_n^m$ is given by
\begin{align}\label{eq:DensityProducts}
 \rho^m_n(z)=n^{m-1}\sum_{k=0}^{n-1}\frac{n^{mk}\abs z ^{2k}}{\pi(k!)^m}G^{m0}_{0m}\left(\genfrac{}{}{0pt}{}{-}{0}\Big\vert n^m\abs z ^2\right),
\end{align}
see \cite{AB12} and compare to the case $m=1$, where $G^{10}_{01}\big(\genfrac{}{}{0pt}{}{-}{0}\big\vert n\abs z ^2\big)=e^{-n\abs z ^2}$. 

We would like to point out that the Coulomb gas picture partially breaks down here, since $V=-\log G^{m,0}_{0,m}\left(\genfrac{}{}{0pt}{}{-}{0}\Big\vert \abs{\cdot} ^2\right)$ is not admissible (according to \cite{Serfaty}). In particular $V$ is unbounded from below in its singularity and hence $\Delta V\sim \mu_\infty ^m$ fails for finite $n$.

\begin{bem}
The viewpoint of studying products of $m$ matrices and definition \eqref{eq:DensityProducts} of $\rho^m_n$ makes sense for $m\in\IN$ only. However the representation of Lemma \ref{lem:DoubleContour} makes sense for arbitrary $m>1, m\in\IR$. Furthermore, as we can see from the proof of Theorem \ref{thm:ProductMeanRate}, its statements \eqref{eq:ProductMeanRate1} and \eqref{eq:ProductMeanRate3} remain true for real $m>1$, as well as \eqref{eq:ProductMeanRate2} for real $m\ge 2$.
\end{bem}

\begin{bem}
Since the constant in \eqref{eq:ProductMeanRate1} of Theorem \ref{thm:ProductMeanRate} is explicit in $m$, it is possible to consider the double scaling limit and let $m=m(n)\to\infty$. A careful inspection of the proof reveals that at most $m=o(n)$ is possible. In this case, the rate will be faster, depending on $m$, and in particular by setting $m=n$, we expect a rate  $\sup_{R>0} \abs{\bar\mu_n^n(B_R) -\mu_{\infty}^n(B_R) } \asymp \frac{1}{n}$. 
On the other hand, $\mu_\infty^m$ converges weakly to $\mu^{\infty}_\infty=\delta_0$ as $m\to\infty$ and for fixed $R>0$ we see that $\delta_0(B_R)-\mu_{\infty}^m(B_R)=1-R^{2/m}/(2\pi)=\mathcal O(1/m)$. Note that $m\sim n$ is also the critical scaling of Lyapunov exponents between deterministic and GUE statistics, see \cite{ABK,LiuWang}.
\end{bem}

\begin{proof}[Proof of Lemma \ref{lem:DoubleContour}]
For the contour $L$ of the Meijer G-function in \eqref{eq:DensityProducts}, we choose the straight vertical line $L=[-1/2-i\infty,-1/2+i\infty]$ that after a simple change of variables $-t=s$ leads to
\begin{align}\label{eq:MeijerGm00m}
 G^{m,0}_{0,m}\left(\genfrac{}{}{0pt}{}{-}{0}\Big\vert n^m\abs z ^2\right)=\frac{1}{2\pi i}\int_{\frac12-i\infty}^{\frac12+i\infty}\Gamma(s)^m(n^m\abs z ^2)^{-s}ds
\end{align}
The remaining part of \eqref{eq:DensityProducts} is the kernel of the (monic) orthogonal polynomials with respect to the Meijer-G-weight. It can be rewritten with the help of the residue theorem. For any closed curve $\gamma$ encircling the numbers $1,\dots,n$ but no natural number greater than $n$, we have
\begin{align*}
\frac{1}{2\pi i}&\oint_{\gamma} \frac{(n^m\abs z^2)^{t-1}}{\Gamma(t)^m}\cot(\pi t)dt=\sum_{k=0}^{n-1}\frac{n^{mk}\abs z ^{2k}}{\pi(k!)^m},
\end{align*}
since the integrand is holomorphic except for its simple poles in $\IN$ with residues $1/\pi$ of each cotangent. Combining both contour integrals proves the claim.
\end{proof}

Asymptotic expansions of $G^{m,0}_{0,m}$, like \cite[\S 5.9.1.]{Luke}, together with heuristics for the hypergeometric kernel give rise to pointwise limits in \cite{AB12}. A rigorous estimation of the error bound uniformly in $z$ seems to be absent in the literature so far. 
%Even the asymptotic formulas for $G^{m,0}_{0,m}$ are valid only for fixed $z$, hence integration around $z=0$ is impossible. Integrating $\rho^m_n$ first leads to $n$-dependent coefficients $a_j$ or $b_j$ in the Definition \eqref{eq:MeijerGdef} of the Meijer G-function, but its asymptotics, e.g. from \cite{Luke}, are not uniform in these coefficients. 
Hence it is reasonable to study the problem by a direct analysis.

\begin{proof}[Proof of Theorem \ref{thm:ProductMeanRate}]
By Lemma \ref{lem:GinibreRate}, it is sufficient to consider $m\ge 2$. For $R>1$ we have $\abs{(\bar\mu_n^m-\mu_\infty^m)(B_R)}<\abs{(\bar\mu_n^m-\mu_\infty^m)(B_1)}$, since $\supp(\mu_\infty^m)=B_1(0)$. Throughout the proof we assume 
\begin{align}\label{eq:AssR}
\log^{3m/4}n/n^{m/2} \le R \le 1
\end{align}
since for smaller values of $R$ it holds
\begin{align*}
\abs{(\bar\mu_n^m-\mu_\infty^m)(B_R(0))}\le\abs{(\bar\mu_n^m-\mu_\infty^m)(B_{\log^{3m/4}n/n^{m/2}}(0))}+\mathcal O(\log^{3/2} n/n),
\end{align*}
due to $\mu^m_\infty(B_R)=R^{2/m}$. We first use spherical symmetry of $\rho^m_n$ and Lemma \ref{lem:DoubleContour} in order to calculate 
\begin{align}\label{eq:DCmu}
 \bar\mu_n^m(B_R)&=\int_0^{R^2}\pi\rho^m_n(\sqrt{r })dr\nonumber\\
 &=\frac \pi{n(2\pi i)^2 } \oint_{\gamma}\int_{\frac12-i\infty}^{\frac12+i\infty}\left(\frac{\Gamma(s)}{\Gamma(t)}\right)^m \frac{(n^mR^2)^{t-s}}{t-s}\cot(\pi t)dsdt.
\end{align}
This holds in the case where $s$ and $t$ have distance bounded from below, which is what we will choose in the following.
We will now show that shifting the vertical contour in Lemma \ref{lem:DoubleContour} to $L=[\eta-i\infty, \eta+i\infty]$ for another real part $\eta\geq1/2$, $\eta\neq 1,\dots,n$, produces an additional term. Cauchy's integral formula implies
\begin{align*}
\frac \pi{(2\pi i)^2 }\left(\int_{\frac12-i\infty}^{\frac12+i\infty}-\int_L\right) \left(\frac{\Gamma(s)}{\Gamma(t)}\right)^m \frac{(n^mR^2)^{t-s}}{t-s}ds =\frac \pi{2\pi i }\eins_{(1/2,\eta)}(\Re (t)).
\end{align*}
We temporarily split $\gamma$ into two parts $\gamma_l$ and $\gamma_r$ such that $\gamma_l$ encircles $\{1,\dots, \lfloor\eta\rfloor\wedge n\}$ and $\gamma_r$ encircles $\{\lceil\eta\rceil,\dots,n\}$. Soon we will make the path of $\gamma$ more explicit. As in \eqref{eq:DCmu}, continuing the integration of the right hand side of the last equation multiplied with $\cot( \pi t)$ over $\gamma_l\cup\gamma_r$ yields
\begin{align}\label{eq:roundingError}
\frac \pi{2\pi i }\oint_{\gamma_l}\cot(\pi t)dt=\lfloor\eta\rfloor\wedge n,
\end{align}
hence we conclude
\begin{align*}
 \bar\mu_n^m(B_R)&=\frac \pi{n(2\pi i)^2 } \oint_{\gamma}\int_L\left(\frac{\Gamma(s)}{\Gamma(t)}\right)^m \frac{(n^mR^2)^{t-s}}{t-s}\cot(\pi t)dsdt+\frac{\lfloor\eta\rfloor}n\wedge 1.
 \end{align*}
Choosing $\eta=\lfloor nR^{2/m}\rfloor+1/2$ we see that the second term is $\mathcal O (1/n)$ close to $\mu^m_\infty(B_R)=R^{2/m}\wedge 1$. Moreover, by Cauchy's integral formula, we may artificially add the removed part $\gamma-\gamma_l-\gamma_r$ again as long as $\gamma$ is symmetric around the $x$-axis. Let $\gamma$ be the rectangular contour connecting the vertices $3/4-i$, $n+1/4-i$, $n+1/4+i$ and $3/4+i$. This ensures a constant distance to the singularities of the cotangent. The scaled version $\tilde\gamma=\gamma/(R^{2/m}n)$ is illustrated below in Figure \ref{fig:Contours}. Furthermore note that the integral exists as we will explicitly show below, see \eqref{eq:ContourIntegrability}. 
Recall Stirling's formula for the Gamma function
\begin{align*}
 \log\Gamma(z)=(z-1/2)\log z-z+\frac12\log 2\pi+\mathcal O(1/\Re z),
\end{align*}
which holds uniformly for $\Re z\geq 1/2$, cf. for instance \cite[p.249]{Whit96}.
Thus, we have 
\begin{align}\label{eq:Stirling}
\log\left(\frac{\Gamma(s)^m}{(n^mR^2)^{s}}\right)=m\left(s\left(\log\left(\frac {s}{nR^{2/m}}\right) -1\right) +\frac12\log\left(\frac{2\pi} {s}\right)\right)+\mathcal O(1/ {\Re(s)}),
\end{align}
where for all $s\in  L$ (and analogously for $t\in\gamma$) the error term is at most $\mathcal O (1)$. We rescale the integration by $nR^{2/m}$ and denote $\tilde \gamma =\gamma/(nR^{2/m})$, $\tilde L=L/(nR^{2/m})$ as well as \begin{align*}
F(z)=z\log z-z
\end{align*}
to obtain
\begin{align}\label{eq:DCmu2}
  \bar\mu_n^m(B_R)-\frac{\lceil nR^{2/m}\rceil}n&\nonumber\\
  =\frac {\pi R^{2/m}}{(2\pi i)^2 } \oint_{\tilde\gamma}\int_{\tilde L}&\left(\frac{\Gamma(nR^{2/m}s)}{\Gamma(nR^{2/m}t)}\right)^m \frac{(n^mR^2)^{(nR^{2/m})(t-s)}}{t-s}\cot(\pi nR^{2/m} t)dsdt\nonumber\\  
  =\frac {\pi }{(2\pi i)^2 } \oint_{\tilde\gamma}\int_{\tilde L}&\exp\left[ nmR^{2/m}\left(F(s)-F(t)\right)\right]\left(\frac ts\right)^{m/2}\frac{\cot(\pi nR^{2/m} t)}{t-s}\\
  &\cdot\left(R^{2/m}+\mathcal O \left(\tfrac1{n\Re(s)}\right)+\mathcal O \left(\tfrac1{n\Re(t)}\right)\right)dsdt.\nonumber
\end{align}

\begin{figure}[h]
 \begin{tikzpicture}
\filldraw[color=gray!80, fill=gray!30, thick] (3.5,-2.2) rectangle (7.9,2.2);
\draw[gray, line width=0.5pt] (-0.8,0) -- (2.5,0);
\draw[gray, line width=0.5pt, dotted] (2.5,0) -- (4,0);
\draw[gray, line width=0.5pt,->] (3.5,0) -- (11,0);
\draw[gray, line width=0.5pt,->] (0,-3.5) -- (0,3.5);
\foreach \x in {0,1,2,4,5,6,7,8,9,10}
\filldraw [gray] (\x,0) circle (1pt);
\filldraw [gray] (0,1) circle (1pt);
\filldraw [gray] (0,2.2) circle (1pt);
\node[below left] at (0,0) {0};
\node[below] at (1,0) {$\tfrac1{nR^{\frac2m}}$};
\node[below] at (2,0) {$\tfrac2{nR^{\frac2m}}$};
\node[below] at (8,0) {$\tfrac{n-1}{nR^{\frac2m}}$};
\node[below] at (9,0) {$\tfrac{1}{R^{\frac2m}}$};
\node[below] at (10,0) {$\tfrac{n+1}{nR^{\frac2m}}$};
\draw[line width=0.8pt] (9.25,1)--(4,1);
\draw[line width=0.8pt, dotted] (2.5,1)--(4,1);
\draw[line width=0.8pt,->] (2.5,1)--(0.75,1)--(0.75,-1);
\draw[line width=0.8pt] (0.75,-1)-- (2.5,-1);
\draw[line width=0.8pt, dotted] (2.5,-1)--(4,-1);
\draw[line width=0.8pt,->] (4,-1)--(9.25,-1)--(9.25,1);
\draw[line width=1.6pt,->] (7.9,1)--(3.5,1);
\draw[line width=1.6pt,->] (3.5,-1)--(7.9,-1);
\draw[gray, dotted] (0,1)--(0.75,1);
\draw[gray, dotted] (0,2.2)--(5.5,2.2);
\draw[line width=0.8pt,->] (5.5,-3.25)--(5.5,3.25);
\draw[line width=1.6pt,->] (5.5,-2.2)--(5.5,2.2);
\filldraw (5.5,0) circle (2pt);
\node[left] at (0.1,1) {$\frac{i}{nR^{\frac{2}{m}}}$};
\node[left] at (0.1,2.2) {$i\delta_n$};
\node[right] at (5.5,1.7) {$L_{loc}$};
\node at (5.8,0) {${\scriptstyle \times}$};
\node[below] at (5.8,0) {\textbf{1}};
\node[below] at (4,1) {$\gamma_{loc}$};
\node[above] at (7,-1) {$\gamma_{loc}$};
\node[below] at (1.6,-1) {$t\in\tilde\gamma$};
\node[left] at (5.5,-2.8) {$s\in\tilde L$};
\node[above left] at (5.63,0) {$\frac\eta {nR^{\frac2m}}$};
\node[below] at (10.8,0) {$\Re$};
\node[left] at (0,3.3) {$\Im$};
\node[right] at (9.25,0.5) {$\gamma_{vert}$};
\node at (7.8,-2.5) {$Q_{\delta_n}(1)$};
\end{tikzpicture}
\caption{The scaled contours of integration and their local parts in thicker lines. As we will see later, the main contribution comes from $\gamma_{loc}$ that is in a box $\delta_n$-close to the saddle point at $z=1$. If $R> 1$, there is no $\gamma_{loc}$ and the integral vanishes exponentially fast (depending on the distance $\abs{R-1}$). If $R< 1$, then both horizontal contours of $\gamma_{loc}$ will cancel, because of their symmetry. In this case we will obtain a rate of convergence of microscopic order $1/n$ due to the discrete nature of the residues. The maximal rate will be attained for $R=1$, where the integrals do not cancel, yet the vertical part $\gamma_{vert}$ is small enough.}
\label{fig:Contours}
\end{figure}

Observe that $\mathcal O (1/(n\Re(t)))=\mathcal O (R^{2/m})$ and $\mathcal O (1/(n\Re(s)))=\mathcal O (1/n)$. We will analyze this main formula using the method of steepest descent, hence we are interested in the saddle points of $F$. The saddle point equation simply reads
\begin{align*}
 F'(z)=\log z=0
\end{align*}
and is obviously satisfied only for $z=1$ only, with $F''(1)=1>0$. Denoting $z=x+iy$, the Cauchy-Riemann equations for $F$ imply 
\begin{align}
 \partial_y\Re F(z)=-\Im F'(z)=-\arg (z)>0&\Leftrightarrow y<0\label{eq:CauchyRiemann1},\\
 \partial_x\Re F(z)=\Re F'(z)=\log\abs{z}>0&\Leftrightarrow \abs z>1,\label{eq:CauchyRiemann2}.
\end{align}
Hence $\Re F$ attains its local maximum $F(1)=-1$ in $y$-direction and its minimum in $x$-direction. Define the box $Q_{\delta_n}(1)=[1-\delta_n,1+\delta_n]\times[-\delta_n,\delta_n]$ around $z=1$ of range 
\begin{align*}\delta_n=\sqrt{\frac{\log n}{ nR^{2/m}}}\le\log^{-1/4} n \to 0
\end{align*}
as $n\to\infty$ by our assumption \eqref{eq:AssR}. Note that $\tilde \gamma$ is $1/(nR^{2/m})=\mathcal O (\delta_n^2)$-close to the real axis and the vertical path $\tilde L$ is equally close to the saddle point $z=1$. The local parts of the paths are given by $L_{loc}=\tilde L\cap Q_{\delta_n}(1)$ and $\gamma_{loc}=\tilde \gamma\cap Q_{\delta_n}(1)$, as well as $L^c_{loc}$ and $\gamma^c_{loc}$ denotes the remaining part of the path (under slight abuse of notation).

Let us collect the necessary bounds for each part of the contour by applying a Taylor approximation around $z=1$. We have $(s-1)=i\Im(s)+\mathcal O (\delta_n^2)$, hence for $s\in L_{loc}$
\begin{align}\label{eq:LocalL}
F(s)=-1-\Im(s)^2/2+\mathcal O (\delta_n^3), 
\end{align}
and similarly for $t\in\gamma_{loc}$
\begin{align}\label{eq:Localg}
 F(t)=-1+(1-\Re(t))^2/2+\mathcal O (\delta^3_n),
\end{align}
since $\abs{\Im(t)}\lesssim \delta_n$. On the other hand for $s\in L_{loc}^c$ by using \eqref{eq:CauchyRiemann1} it holds
\begin{align}\label{eq:NonlocalL}
\Re F (s)<\Re F(\eta/(nR^{2/m})+i\delta_n)=-1-\delta_n^2/2+\mathcal O (\delta_n^3)
\end{align}
and for $t\in\gamma_{loc}^c$ we see from \eqref{eq:CauchyRiemann2}
\begin{align}\label{eq:Nonlocalg}
 \Re F(t)&>\Re F( 1\pm \delta_n)=-1+\delta_n^2/2+\mathcal O (\delta_n^3).
\end{align}
The nonlocal terms are negligible, e.g. we apply \eqref{eq:Localg} and \eqref{eq:NonlocalL} to obtain
\begin{align*}
 &R^{2/m}\int_{\gamma_{loc}}\int_{L_{loc}^c}\exp\left[ nmR^{2/m}\left(\Re F(s)-\Re F(t)\right)\right]\abs{\frac ts}^{m/2}\frac{\abs{\cot(\pi n R^{2/m}t)}}{\abs{t-s}}dsdt\\
 &\lesssim R^{2/m} \int_{\gamma_{loc}}\int_{L_{loc}^c}\exp\left[-\frac m2\log n +\mathcal O (\delta_n)\right]\frac{1}{\abs{s}^{m/2}\abs{\Im(s)}}dsdt\\
 &\lesssim R^{2/m} n^{-m/2}\lesssim n^{-1},
\end{align*}
where we used $\abs{\Im(s)}\gtrsim \delta_n$, $\abs{\gamma_{loc}}=\mathcal O (\delta_n)$, $t=\mathcal O (1)$, $\abs{\cot(\pi nR^{2/m} t)}\asymp 1$ and $m\ge 2$. Moreover from \eqref{eq:NonlocalL}, \eqref{eq:Nonlocalg} and $t=\mathcal O (R^{-2/m})$ it follows
\begin{align*}
 &R^{2/m}\int_{\gamma^c_{loc}}\int_{L_{loc}^c}\exp\left[ nmR^{2/m}\left(\Re F(s)-\Re F(t)\right)\right]\abs{\frac ts}^{m/2}\frac{\abs{\cot(\pi n R^{2/m}t)}}{\abs{t-s}}dsdt\\
 &\lesssim R^{2/m} \int_{\gamma^c_{loc}}\int_{L_{loc}^c}\exp\left[-m\log n +\mathcal O (\delta_n)\right]\frac{R^{-1}}{\abs{s}^{m/2}\abs{\Im(s)}}dsdt\\
 &\lesssim R^{-1} n^{-m}\lesssim n^{-1},
\end{align*}
where the last step once more follows from the assumption \eqref{eq:AssR}. Analogously we obtain from \eqref{eq:LocalL}, \eqref{eq:Nonlocalg} 
\begin{align}\label{eq:nonLocal}
 &R^{2/m}\int_{\gamma_{loc}^c}\int_{L_{loc}}\exp\left[ nmR^{2/m}\left(\Re F(s)-\Re F(t)\right)\right]\abs{\frac ts}^{m/2}\frac{\abs{\cot(\pi n R^{2/m}t)}}{\abs{t-s}}dsdt\\
  &\lesssim R^{2/m} \int_{\gamma^c_{loc}}\int_{L_{loc}}\exp\left[-\frac m2\log n +\mathcal O (\delta_n)\right]\frac{\abs t ^{m/2}}{\abs{\Re(t)-1}}dsdt\nonumber\\
  &\lesssim  \delta_n R^{-1+2/m} n^{-m/2}\lesssim n^{-1}.\nonumber
\end{align}
Locally close to $z=1$, the error term of Stirling's formula \eqref{eq:DCmu2} is $\mathcal O (n^{-1})$. Thus, it remains to control 
\begin{align}\label{eq:RemainingIntegral}
\int_{\gamma_{loc}}\int_{L_{loc}}\hspace{-4pt}\exp&\left[ nmR^{2/m}\left(F(s)-F(t)\right)\right]\left(\frac ts\right)^{\frac m2}\frac{\cot(\pi nR^{2/m} t)}{t-s}\big(R^{2/m}+\mathcal O (n^{-1})\big)dsdt\nonumber\\
 =\int_{\gamma_{loc}}\int_{L_{loc}}&\exp\left[ -\frac{nmR^{2/m}}2(\Im(s)^2+(1-\Re(t))^2)  \right]\frac{\cot(\pi nR^{2/m} t)}{t-s}\nonumber\\
& \cdot\left(R^{2/m}+\mathcal O \Big(R^{1/m}\sqrt{\frac{\log ^3 n}{ n}}\Big)\right)dsdt,
\end{align}
where we used \eqref{eq:LocalL}, \eqref{eq:Localg} and $t/s=1+\mathcal O (\delta_n)$. We parameterize $L_{loc}$ as the straight line
\begin{align*}
 s=\frac{\eta}{nR^{2/m}}+\frac{i}{\sqrt{nmR^{2/m}}}u,\quad u\in I=(-\sqrt{ m \log n},+\sqrt{ m \log n}).
\end{align*}
The vertical microscopic part 
\begin{align*}
\gamma_{vert}=[3/4+i , 3/4-i ]/(nR^{2/m})\cup([n+1/4-i, n+1/4+i ]/(nR^{2/m}))
\end{align*}
receives the same scaling, e.g. for the right part we choose
\begin{align*}
 t=R^{-2/m}+\frac1 {4nR^{2/m}}+\frac{i}{\sqrt{nmR^{2/m}}}v, \quad v\in (- \sqrt{ m/(nR^{2/m})},\sqrt{ m/(nR^{2/m})}).
\end{align*}
This part of the integral \eqref{eq:RemainingIntegral} on $\gamma_{vert}$ is visible if and only if $R$ is close to $1$. In this case, we drop the part in $t$ and the exponential function becomes $e^{-u^2/2}$.  Using $R\sim 1$ and $\abs{\cot (\pi/4+ix)}=1$ for $x\in \IR$, the integration over $\gamma_{vert}$ can then be bounded by 
\begin{align}\label{eq:VerticalContour}
 &\abs{\int_{-\sqrt{ m/(nR^{2/m})}}^{\sqrt{ m/(nR^{2/m})}}\int_I \frac{e^{-u^2/2}\cot\left(\pi/4+ivR^{1/m}\sqrt{n/m}\right)}{m(n-\eta+1/4)/R^{1/m} +i(v-u)\sqrt{nm}}R^{1/m}dudv}\\
 &\lesssim \int_{-\sqrt{ m/(nR^{2/m})}}^{\sqrt{ m/(nR^{2/m})}}\int_I \frac{e^{-u^2/2}}{\sqrt{m(n-\eta+1/4)^2/R^{2/m} +n(v-u)^2}}dudv\nonumber\\
  &\lesssim \frac{1}{n}\int_{-\infty}^\infty \frac{e^{-u^2/2}}{\sqrt{1/n +u^2}}du
 \lesssim \frac{\log n}{n},\nonumber
 \end{align}
where in the second step we shifted $u$ by $v=\mathcal O(1/\sqrt{n})$ and used $m(n-\eta+1/4)^2/R^{2/m}\gtrsim 1$. The last step follows from the asymptotics of the modified Bessel function $K_0(1/(4n))$ or more elementarily by splitting the integration into $\abs{u}\lessgtr1$. From $\delta_n\to 0$ and $1/(nR^{2/m})\to 0$ it follows that the left vertical path is not contained in $Q_{\delta_n}(1)$. 

We parameterize the remaining path of $\gamma_{loc}\setminus \gamma_{vert}$ as horizontal lines 
\begin{align*}
t=1\mp\frac{1}{\sqrt{nmR^{2/m}}}v\pm\frac{i}{nR^{2/m}},\quad v\in \tilde I\subseteq I,
\end{align*}
where $\tilde I$ is the part of $I$ such that the corresponding contour overlaps $\tilde\gamma$. The integral \eqref{eq:RemainingIntegral} becomes the
sum of two terms, which can be estimated as
\begin{align}\label{eq:ContourIntegrability}
&\Bigg\vert\int_{\tilde I}\int_I e^{-\frac{u^2+v^2}{2}}\frac{\cot(\pi nR^{\frac 2 m}\pm i\pi \mp \pi \sqrt{nR^{\frac 2 m}/m}v) }{ \mp v+(nR^{\frac 2 m}-\eta)\sqrt{\frac{m}{nR^{\frac 2 m}}} +i\left(\pm \sqrt{\frac{m}{nR^{\frac 2 m}}}-u \right) }\nonumber\\
&\qquad\frac{iR^{\frac 1 m}+\mathcal O (\sqrt{\frac{\log ^3 n}{n}})}{\mp\sqrt{nm}}dudv\Bigg\vert\\
&\le\frac{1}{\sqrt{nm}}\int_{\tilde I}\int_I e^{-\frac{u^2+v^2}{2}}\frac{R^{\frac 1 m}+\mathcal O\Big(\sqrt{\frac{\log ^3 n}{n}}\Big)}{ \sqrt{\left(v\mp(nR^{\frac 2 m}-\eta)\sqrt{\frac{m}{nR^{\frac 2 m}}}\right)^2 +\left(u\mp \sqrt{\frac{m}{nR^{\frac 2 m}}}\right)^2}  }dudv\nonumber\\
%&=\frac{1}{\sqrt{nm}}\int\int e^{-\frac{u^2+v^2}{2}}\frac{\left(1+\mathcal O(\frac{\log n}{\sqrt n R^{\frac 1 m}})  \right)\left(R^{\frac 1 m}+\mathcal O (\sqrt{\frac{\log ^3 n}{n}})\right)}{ \sqrt{v^2 +u^2}  }dudv\nonumber\\
&\le\frac{1}{\sqrt{nm}}\int_{\IR^2}e^{-\abs{z+\mathcal O(\sqrt{m/(nR^{2/m})})}^2/2}\frac{R^{\frac 1 m}+\mathcal O\Big(\sqrt{\frac{\log ^3 n}{n}}\Big)}{\abs{z}} d\lebesgue(z)\nonumber
\end{align}
where we shifted $z=v+ui\in I+i\tilde I$ by the corresponding values from the first denominator, which are of order $\mathcal O(\sqrt{m/(nR^{2/m})})=\mathcal O (\log^{-\frac 3 4} n)$, and extended the area of integration. A simple estimate on the errors in the exponential function yields
\begin{align*}
&\frac{1}{\sqrt{nm}}\int_{\IR^2}e^{-\abs{z}^2/2} \left( 1+(\abs z +1)\mathcal O \Big( \sqrt{\frac{m}{nR^{\frac 2 m}}}\Big)\right) 
\frac{R^{\frac 1 m}+\mathcal O\Big(\sqrt{\frac{\log ^3 n}{n}}\Big)}{\abs{z}}d\lebesgue(z)\nonumber\\
&\le\frac{1}{\sqrt{nm}}\int_{\IR^2}e^{-\abs z^2/2}\frac{1+\mathcal O\Big(\sqrt{\frac{\log ^3 n}{n}}\Big)}{\abs{z}} d\lebesgue(z)\sim\sqrt{\frac{2\pi^3}{nm}}\nonumber.
\end{align*}
Recalling the correct prefactor  $c=-1/4\pi$ from \eqref{eq:DCmu2}, we conclude
\begin{align*}
\abs{\mu_{\infty}^m(B_R)-\bar\mu_n^m(B_R)}\leq \sqrt{\frac{\pi}{2nm}}+o (n^{-1/2}).
\end{align*}
In order to obtain the lower bound of the claim, it suffices to consider $R=1$. Moreover we will only study the sum of \eqref{eq:ContourIntegrability} keeping the phase factor of the integrand, since all the other parts of the double contour integral are proven to be of strictly lower order than $o (n^{-1/2})$. We have the approximation
\begin{align*}
&\mu_{\infty}^m(B_1)-\bar\mu_n^m(B_1)\\
 &=\frac{1}{4\pi}\Bigg(\int_{-\sqrt m/(4\sqrt n)}^{\sqrt{m \log n}}\int_{-\sqrt{m \log n}}^{+\sqrt{m \log n}}e^{-\frac{u^2+v^2}{2}}\frac{\cot(i\pi - \pi \sqrt{n/m}v) }{v+\frac{\sqrt m}{2\sqrt n} +i\left(-\sqrt{\frac mn}+u\right) }\frac{i+o(1)}{\sqrt{nm}}dudv\\
 &+ \int_{-\sqrt{m \log n}}^{+\sqrt m/(4\sqrt n)}\int_{-\sqrt{m \log n}}^{+\sqrt{m \log n}}e^{-\frac{u^2+v^2}{2}}\frac{\cot(-i\pi + \pi \sqrt{n/m}v) }{v-\frac{\sqrt m}{2\sqrt n} +i\left(- \sqrt{\frac mn}-u\right) }\frac{i+o(1)}{\sqrt{nm}}dudv\\
&=\frac{1}{4\pi\sqrt{nm}}\int_{\sqrt m/ (4\sqrt n)}^{\sqrt{m \log n}}\int_{-\sqrt{m \log n}}^{+\sqrt{m \log n}}e^{-\frac{u^2+v^2}{2}}\Big(1 +\mathcal O\Big(\sqrt{\tfrac{\log n}{n}} \Big) \Big)\frac{-i}{v+ui}\\
&\qquad \cdot(\tan(i\pi-\pi\sqrt{n/m}v)+\tan(i\pi+\pi\sqrt{n/m}v))
dudv+o (n^{-1/2}).
\end{align*}
In the last step we first proceeded similarly to \eqref{eq:ContourIntegrability}, i.e. we shifted $v$ by a negligible value $\pm\sqrt m/(2\sqrt n)$ and $u$ by $\pm \sqrt{m/n}$,  and then inverted $v\to -v$ in the second integral in order to merge both and put $\cot(\pi/2-z)=-\tan(-z)=\tan(z)$. Note that $\sup_{x\in\IR}\abs{\tan(i\pi-x)+\tan(i\pi+x)-2i}=:\epsilon<0.01$ and that the left hand side of the previous equation is real, hence
\begin{align*}
 &\mu_{\infty}^m(B_1)-\bar\mu_n^m(B_1)\\
 &\ge \frac{1-\epsilon}{2\pi\sqrt{nm}}\int_{\sqrt m/ (4\sqrt n)}^{\sqrt{m \log n}}\int_{-\sqrt{m \log n}}^{+\sqrt{m \log n}}e^{-\frac{u^2+v^2}{2}}\frac{v}{u^2+v^2}dudv+o(n^{-1/2})\\
 &\sim\frac{1-\epsilon}{\sqrt{2\pi m n}}.
\end{align*}
The same upper bound holds with $1+\epsilon$ instead. For a better control of the constant one may vary the distance of $\gamma$ to the real axis from the start. The above asymptotic yields the first claim and coincides with $m=1$ from Lemma \ref{lem:GinibreRate}. 

If we avoid the edge by some distance $\abs{1-R^{1/m}}\gtrsim\sqrt{\log n/ n}>0$, then $\delta_n<\abs{1-1/R^{2/m}}$. Hence $\gamma_{vert}$ is not a part of $\gamma_{loc}$ and \eqref{eq:VerticalContour} drops out. This is the case for $R<1-\tfrac m2 \sqrt{\log n/ n}$
, for which we have $I=\tilde I$. As before, we shift $u$ and $v$ by $\mathcal O (1/\sqrt{nR^{2/m}})$, now we denote the shifted intervals by $I_1, I_2$ and obtain
\begin{align*}
&\int_{ I}\int_I e^{-\frac{u^2+v^2}{2}}\Bigg(\frac{-\cot(\pi nR^{\frac 2 m}+ i\pi - \pi \sqrt{nR^{\frac 2 m}/m}v) }{ - v+(nR^{\frac 2 m}-\eta)\sqrt{m/(nR^{\frac 2 m})} +i\big( \sqrt{m/(nR^{\frac 2 m})}-u \big) }\\
&+\frac{\cot(\pi nR^{\frac 2 m}- i\pi + \pi \sqrt{nR^{\frac 2 m}/m}v) }{ v+(nR^{\frac 2 m}-\eta)\sqrt{m/(nR^{\frac 2 m})} -i\big(\sqrt{m/(nR^{\frac 2 m})}+u \big) }\Bigg)\frac{iR^{\frac 1 m}+\mathcal O (\sqrt{\frac{\log ^3 n}{n}})}{\sqrt{nm}}dudv\\
&=\int_{I_1}\int_{I_2} e^{-\frac{u^2+v^2}{2}}\Big(1 +(1+v+ui)\mathcal O\Big(\tfrac{1}{\sqrt{nR^{\frac2m}} }\Big)\Big) \frac{iR^{\frac 1 m}+\mathcal O (\sqrt{\frac{\log ^3 n}{n}})}{\sqrt{nm}(v+ui)}\\
&\qquad \cdot\left(\tan(i\pi - \pi \sqrt{nR^{\frac 2 m}/m}v) -\tan(- i\pi + \pi \sqrt{nR^{\frac 2 m}/m}v)\right)dudv\\
&=\int_{ I}\int_I e^{-\frac{u^2+v^2}{2}}\frac{iR^{\frac 1 m}+\mathcal O (\sqrt{\frac{\log ^3 n}{n}})}{\sqrt{nm}(v+ui)}\\
&\qquad \cdot\left(\tan(i\pi - \pi \sqrt{nR^{\frac 2 m}/m}v) -\tan(- i\pi + \pi \sqrt{nR^{\frac 2 m}/m}v)\right)dudv.
\end{align*}
As we have already showed and used in \eqref{eq:ContourIntegrability}, the error terms of the shifted variables in the exponential function are absorbed in the error term $\mathcal O (\sqrt{\log ^3 n /n}) $. Furthermore, the deviation between the integrals over $I$ instead of $I_1, I_2$ are obviously negligible as well. From the last line it follows that the horizontal contour integrals over $I$ cancel due to symmetry, thus only the error term remains. Collecting this error, together with the non-local terms, we obtain
\begin{align*}
\mu_{\infty}^m(B_R)-\bar\mu_n^m(B_R)\lesssim\frac{1}{\sqrt n}\int_{-\infty}^{\infty}\int_{-\infty}^{\infty} e^{-\frac{u^2+v^2}{2}}\frac{\sqrt{\log^3 n}}{\sqrt n\abs{v+ui}}dudv+\mathcal O \Big(\frac 1 n\Big)\lesssim\frac{\log^{3/2} n}{n}.
\end{align*}

Lastly we turn to the statement about the exponential decay for $R>1+\sqrt{\log n/n}$. As before it is sufficient to consider $R\leq2$, because of $\supp(\mu_\infty^m)=B_1$. The position of the minimum in $x$-direction in \eqref{eq:CauchyRiemann2} and $\Re( t)\leq (n+1)/(nR^{2/m})$ for $t\in\tilde\gamma$ yield
\begin{align}\label{eq:Nonlocalg2}
 \Re F(t)&=\Re F(\Re (t))+\mathcal O(1/n)\nonumber\\
 &\ge F\left(\frac{n+1}{nR^{2/m}}\right)+\mathcal O(1/n)\\
 & =-R^{-2/m}\left(\log (R^{2/m})+1\right)+\mathcal O(1/n)\nonumber
\end{align}
for $n$ sufficiently large. Note that in this case of big $R$, the local area $Q_{\delta_n}(1)$ does intersect $\tilde\gamma$, hence $\gamma_{loc}=\emptyset$. We estimate \eqref{eq:DCmu2} similarly to \eqref{eq:nonLocal}, i.e. apply \eqref{eq:LocalL}, \eqref{eq:NonlocalL}, \eqref{eq:Nonlocalg2} to obtain 
\begin{align*}
 &\int_{\tilde\gamma}\int_{\tilde L}\exp\left[ nmR^{2/m}\left(\Re F(s)-\Re F(t)\right)\right]\abs{\frac ts}^{m/2}\frac{\abs{\cot (\pi n R^{2/m}t)}}{\abs{t-s}}dsdt\\
 &\lesssim \int_{\tilde\gamma}\int_{\tilde L}\exp\left[-nm(R^{2/m}-1 -\log(R^{2/m})) +\mathcal O (1)\right]\frac{1}{\abs{s}^{m/2}\abs{\Im(s)}}dsdt.
 \end{align*}
In particular, we used the fact that $\abs{t-s}$ behaves like $\abs{\Im (s)}$ for $s\in L_{loc}^c$ and is bounded from below by $\Re(s-t)\gtrsim \sqrt{\log n / n}\gtrsim \abs{\Im (s)}$ for $s\in L_{loc}$, since $\Re(t)$ is small. The remaining integral over $t\in\tilde\gamma$ is finite, since $\tilde\gamma$ has finite length and the integral over $s\in\tilde L$ is finite, hence we may use Bernoulli's inequality to conclude
 \begin{align*}
  \abs{\mu_{\infty}^m(B_R)-\bar\mu_n^m(B_R)}\lesssim \exp\left[-2n(R-1-\log R)\right]\le\exp\left[-n(R-1)^2/3\right], 
\end{align*}
where the last inequality holds for $1<R<2$ by a Taylor approximation of the logarithm. 
Ultimately all claims are proven.
\end{proof}

It seems to be an artifact of the method of proof that we do not obtain an exponential rate of convergence inside the bulk in the case of products of Gaussian random matrices. Only the rate $\mathcal O (1/n)$ seems to be achievable due to the discrete nature of the residue calculus, cf. \eqref{eq:DCmu2} above. 

 The proof of the rate of convergence for the non-averaged ESD makes use of the previous result, the determinantal structure of $\mu_n^m$ and Bernstein's inequality.

 \begin{proof}[Proof of Theorem \ref{thm:ProductRate}]
 Without loss of generality, we consider $ 0<R<1$, since for $R\ge1$, we have $|\mu_n^m(B_R)-\mu_\infty(B_R)|\le|\mu_n^m(B_1)-\mu_\infty(B_1)|$. %By a union bound argument it is sufficient to consider a fixed radius: 
Suppose we had shown already that for any $Q>0$ there exists a $c>0$ such that
\begin{align}\label{eq:fixedR}
 \IP\Big(\abs {\mu_n^m(B_R )-\mu_\infty^m(B_R )}>t_n/5\Big)\le n^{-Q-\lceil m/4\rceil}
\end{align}
for $t_n=c\sqrt{\log n/ n}$ and fixed $0<R<1$. Set $p=\lceil m/4\rceil$ and consider the equidistant points $r_k=k/n^p$ for $k=1,\dots,n$. We have
\begin{align*}
\abs{\mu_n^m(B_{R} )-\mu_\infty^m( B_{R} )}\le \mu_n^m(B_{r_{k+1}} \hspace{-2pt}\setminus\hspace{-2pt} B_{r_{k}} )+\mu_\infty^m(B_{r_{k+1}} \hspace{-2pt}\setminus\hspace{-2pt} B_{r_{k}} )+\abs{\mu_n^m(B_{r_{k}})-\mu_\infty^m (B_{r_{k}} )}
\\ \le \abs{\mu_n^m(B_{r_{k+1}})-\mu_\infty^m (B_{r_{k+1}} )}+2\mu_\infty^m(B_{r_{k+1}} \hspace{-2pt}\setminus\hspace{-2pt} B_{r_{k}} )+2\abs{\mu_n^m(B_{r_{k}})-\mu_\infty^m (B_{r_{k}} )}
\end{align*}
where we chose $k=\lfloor Rn^p\rfloor$.
Taking the supremum over $R$ is equivalent to taking the maximum in $k$, hence the union bound implies
\begin{align*}
 \IP\Big(\sup_{R>0}\abs{\mu_n^m(B_{R} )-\mu_\infty^m( B_{R} )}> t_n\Big)
\le&
 \sum_{k=1}^{n^p}\Big(\IP\big(\abs{\mu_n^m(B_{r_{k+1}})-\mu_\infty^m (B_{r_{k+1}} )}>  t_n/5\big)\\
  &+ \IP\big(\mu_\infty^m(B_{r_{k+1}} \setminus B_{r_{k}} )>  t_n/5\big)\\
  &+ \IP\big(\abs{\mu_n^m(B_{r_{k}})-\mu_\infty^m (B_{r_{k}} )}>  t_n/5\big)\Big).
\end{align*}
The first and last term are covered by \eqref{eq:fixedR} and the second term vanishes because of 
\begin{align*}
 \mu_\infty^m(B_{r_{k+1}} \setminus B_{r_{k}})=\frac{(k+1)^{2/m}-k^{2/m}}{n^{2p/m}}\le n^{-p(1\wedge 2/m)}\le n^{-1/2}.
\end{align*}
Thus, we have shown that for all $Q>0$ there exists a constant $c>0$ such that $\sup_{R>0}\abs{\mu_n^m(B_{R} )-\mu_\infty^m( B_{R} )}\le c \sqrt{\log n/ n}$ holds  with probability at least $1- n^{-Q}$.
 
It remains to show \eqref{eq:fixedR} for which we follow the ideas of \cite[Proposition 2.2]{MM14rate} for the case $m=1$. Fix $R<1$ and let $\xi_k\sim Ber(\eta_k)\in\{0,1\}$ be independent Bernoulli variables with parameters $\eta_k\in[0,1]$, which shall be defined in the following. According to \cite[Corollary 4.2.24]{AGZ}, the determinantal point process 
\begin{align*}
\#\{\lambda_j(\sqrt{n^m}\mathbf X)\in B_{\sqrt{n^m}R} \}=n\mu_n^m(B_R )\stackrel{\mathcal D}{=}\sum_{k=1}^n\xi_k
\end{align*}
has the same distribution as the sum of Bernoulli variables, where the parameters $\eta_k$ are given by the eigenvalues of the trace class operator %(NOTE: the following two lines are actually not important right now)
\begin{align*}
 \mathcal K_R f(z)=\int_{B_{\sqrt{n^m}R} } \sqrt{G^{m,0}_{0,m}\Big(\genfrac{}{}{0pt}{}{-}{0}\Big\vert \abs z ^2\Big)G^{m,0}_{0,m}\Big(\genfrac{}{}{0pt}{}{-}{0}\Big\vert \abs w ^2\Big)} \sum_{k=0}^{n-1} \frac{(z\bar w)^k}{\pi k!^m} f(w) d\lebesgue(w)
\end{align*}
for $f\in L^2(B_{\sqrt{n^m}R} )$. Due to rotational symmetry, like we argued for the orthogonality of the monomials, $\mathcal K_R$ has eigenfunctions $\phi_k(w)=\sqrt{G^{m,0}_{0,m}\Big(\genfrac{}{}{0pt}{}{-}{0}\Big\vert \abs w ^2\Big)}w^k$ with eigenvalues
\begin{align*}
 \eta_k=\int_{B_{\sqrt{n^m}R} } G^{m,0}_{0,m}\Big(\genfrac{}{}{0pt}{}{-}{0}\Big\vert \abs w ^2\Big) \frac{\abs w ^{2k}}{\pi k!^m} d\lebesgue(w)\le 1.
\end{align*}
It follows from Theorem \ref{thm:ProductMeanRate} that 
\begin{align*}
 \IP\Big(\abs {\mu_n^m(B_R )-\mu_\infty^m(B_R )}\ge \frac {t}{\sqrt n}\Big)&\le \IP\Big(\abs {\mu_n^m(B_R )-\bar\mu_n^m(B_R )}\ge \frac{t- c}{\sqrt n}\Big)\\
 &=\IP\Big(\Big\vert\sum_{k=1}^n\xi_k-\IE\Big(\sum_{k=1}^n\xi_k\Big) \Big\vert\ge (t-c)\sqrt n\Big).
\end{align*}
Applying Bernstein's inequality, see \cite[Equation (2.10)]{BLMconc} yields
\begin{align}\label{eq:Bernstein}
 \IP\Big(\abs {\mu_n^m(B_R )-\mu_\infty^m(B_R )}>\frac{t}{\sqrt{n}}\Big)&\le 2\exp\Big( -\frac{n(t-c)^2}{2\sum_{k=1}^n\IE\abs{\xi_k}^2+\tfrac 2 3(t-c)\sqrt n}\Big)\nonumber\\
& \le e^{-t^2/3}
\end{align}
for $t$ sufficiently large, since $\IE\abs{\xi_k}^2\le\eta_k\le 1$. In particular for $t=c\sqrt{\log n}/5$ with $c=5\sqrt{3(Q+\lceil m/4\rceil)}$, we obtain \eqref{eq:fixedR} as claimed. 
\end{proof}

 Note that the logarithmic factor in the rate of convergence is expected to be non-optimal and might be removed by controlling the variance term \eqref{eq:Bernstein}, but this will not be pursued here.

\section{Matrices with independent entries}\label{sec:iidMatrices}
Let us begin with some necessary notations. We define the \emph{linearization matrix} as the $mn\times mn$-block matrix
\begin{align*}\mathbf W:=\frac 1{\sqrt n}
\begin{pmatrix}
0 & X^{(1)} & 0 & \cdots & 0\\
\vdots & 0& X^{(2)} & \ddots &\vdots\\
\vdots& & \ddots&\ddots& 0\\
0&\cdots&\cdots&0& X^{(m-1)}\\
X^{(m)} &0  & \cdots &\cdots& 0\\
\end{pmatrix}
\end{align*}
and note that $\mathbf W^m$ is a block diagonal matrix with cyclic products of $X^{(1)},\dots,X^{(m)}$. Consequently, its eigenvalues consist of the eigenvalues $\lambda_j(\mathbf X)$ of $\mathbf X$ with multiplicity $m$. Furthermore define its shifted Hermitization
\begin{align}\label{eq:V}
 \mathbf V(z):=\begin{pmatrix}
  0 & \mathbf W -z\\
  (\mathbf W -z)^* & 0
 \end{pmatrix}
\end{align}
for $z\in\IC$. The eigenvalues of $\mathbf V (z)$ are given by $\pm s_j(\mathbf W -z)$, where $s_{\max}=s_1\geq\dots\geq s_{mn}=s_{\min}$ are the singular values of $\mathbf W -z$. Its ESD shall be denoted as $\tilde\nu^z_n$.

Similar to the role of the Stieltjes transform in the theory of Hermitian random matrices, the weak topology of measures $\mu$ on $\IC$ can be expressed in terms of the so called logarithmic potential $U$, which is the solution of the distributional Poisson equation. More precisely for every finite Radon measure $\mu$ on $\IC$ the \emph{logarithmic potential} defined by
\begin{align}\label{eq:logPot}
 U_\mu(z):=-\int_{\IC}\log\abs{t-z}d\mu(t)=(-\log\abs \cdot\ast\mu)(z)
\end{align}
and it satisfies
\begin{align}\label{eq:distrPoisson}
    \Delta U_\mu=-2\pi \mu 
\end{align}
in the sense of distributions. Let $U_n$ denote the logarithmic potential of the ESD of $\mathbf W$. The advantage of the logarithmic potentials $U_n$ of $\mu_n$ in non-Hermitian random matrix theory is the following identity known as \emph{Girko's Hermitization trick}
\begin{align}
U_n(z)&=-\frac{1}{nm}\sum_{j=1}^{nm}\log\abs{\lambda_j(\mathbf W)-z}=-\frac{1}{2nm}\log\abs{\det \mathbf V(z)} =-\int_{-\infty}^\infty\log \abs xd\tilde\nu_n^z(x).\label{eq:GirkoHerm}
\end{align}
 Under the above-mentioned conditions on the matrix entries, the logarithmic potential $U_n$ concentrates around the logarithmic potential $U_\infty$ of the Circular Law given by
\begin{align*}
 U_\infty(z)=\begin{cases}
              -\log\abs z &\text{, if }\abs z >1\\
              \tfrac 1 2 (1-\abs z ^2) &\text{, if }\abs z \le 1
             \end{cases}.
\end{align*}

\begin{prop}[\cite{GNT17Local}]\label{prop:ConcLogPot}
If $\mathbf X$ obeys (C), then for every $\tau,Q>0$ there exists a constant $c>0$ such that 
\begin{align}
\IP\left(\abs{U_n(z)-U_\infty(z)}\leq c\frac{\log^4 n}{n}\right)\geq 1-n^{-Q}
\end{align}
holds uniformly in $\{z\in B_{1+\tau^{-1}}: \abs{1-\abs z }\ge \tau\}$.
\end{prop}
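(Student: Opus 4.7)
The plan is to combine Girko's Hermitization trick \eqref{eq:GirkoHerm}, which expresses $U_n(z)$ as an integral against the empirical singular value distribution $\tilde\nu_n^z$ of $\mathbf W - z$, with a resolvent local law for the Hermitization $\mathbf V(z)$. Let $\tilde\nu_\infty^z$ denote the deterministic limit of $\tilde\nu_n^z$ (its existence, and the fact that $-\int\log\abs x\,d\tilde\nu_\infty^z = U_\infty$, is established in \cite{GT10Products}), and let $s_n^z(\zeta), s_\infty^z(\zeta)$ be the corresponding Stieltjes transforms. Using the elementary identity $\log x = \tfrac12 \log(x^2+T^2) - \int_0^T y/(x^2+y^2)\,dy$ for $x,T>0$, Girko's formula rewrites as
\begin{align*}
 U_n(z)-U_\infty(z) = -\tfrac12 \int \log(x^2+T^2)\,d(\tilde\nu_n^z-\tilde\nu_\infty^z)(x) + \int_0^T y\,\Im\bigl(s_n^z(iy)-s_\infty^z(iy)\bigr)dy.
\end{align*}
Choosing $T=n^C$ for some large constant $C$ reduces the first term to a contribution supported at the polynomial tail of the singular value spectrum, controlled by an operator norm bound on $\mathbf W$.

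Three inputs are then required, all standard in the local-law literature under condition (C): \textbf{(i)} an operator-norm bound $\norm{\mathbf W}\lesssim 1$ with overwhelming probability, which follows from moment hypotheses and the block structure of $\mathbf W$; \textbf{(ii)} a polynomial lower bound $s_{\min}(\mathbf W-z)\ge n^{-B}$ with probability at least $1-n^{-Q}$, uniformly in $z$ with $\abs{1-\abs z}\ge\tau$; and \textbf{(iii)} a local law on the near-optimal scale,
\begin{align*}
 \abs{s_n^z(iy)-s_\infty^z(iy)} \lesssim \frac{\log^A n}{n y},
\end{align*}
valid with high probability for all $y \ge n^{-1+\epsilon}$ and $z$ in the admissible annulus, which for the product model is the content of the matrix Dyson / cumulant expansion carried out in \cite{GNT17Local}. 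I would then split the $y$-integral at $y_0=n^{-1+\epsilon}$. On $[y_0,T]$ input (iii), combined with the extra factor $y$ in the integrand, yields a contribution of order $\log^{A+1}n/n$; a careful accounting of the logarithmic losses at the different scales produces the exponent $4$ of the statement. On $(0,y_0)$ the deterministic inequality $y\,\Im s_n^z(iy)\le (mn)^{-1}\#\{j:s_j(\mathbf W-z)\le y\}$ together with (ii) and (iii) applied just above $y_0$ shows that the microscopic part contributes at most $n^{-1+\epsilon}$, which is absorbed into the main error. The same dichotomy, combined with (i), handles the $\log(x^2+T^2)$ term.

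The main obstacle is (ii): the polynomial lower tail on the smallest singular value of $\mathbf W-z$. Despite its large dimension, $\mathbf W$ carries a block-cyclic nilpotent structure, so that $\mathbf W-z$ degenerates in a structured manner and the shift $z$ couples nontrivially to this structure. The assumption $\abs{1-\abs z}\ge\tau$ precisely excludes the circular spectral edge of $\mathbf W^m$ where $\tilde\nu_\infty^z$ loses its regularity at the origin; under this separation one can adapt the inversion-of-randomness technique of Tao--Vu and Rudelson--Vershynin, as carried out in \cite{GNT17Local}, to obtain the required bound. With (i)--(iii) in place the assembly sketched above delivers the claimed concentration $\abs{U_n(z)-U_\infty(z)}\lesssim \log^4 n/n$ with probability at least $1-n^{-Q}$.
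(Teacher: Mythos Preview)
Your overall strategy is sound and is a legitimate alternative, but the paper takes a different route. Rather than integrating the Stieltjes transform along the imaginary axis, the paper truncates $\int\log\abs{x}\,d(\tilde\nu_n^z-\tilde\nu^z)$ to the window $[n^{-B},n^{B'}]$ via the extreme singular-value events $\{s_{\min}\ge n^{-B}\}$ (quoting \cite{OS11}) and $\{s_{\max}\le n^{B'}\}$ (from the Hilbert--Schmidt bound $\IE\norm{\mathbf W-z}^2$), and on that window bounds the $\log$-integral by the Kolmogorov distance $d_n^*(z)=\sup_x\abs{(\tilde\nu_n^z-\tilde\nu^z)(-\infty,x]}$ times $\log n$. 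The input $d_n^*(z)\lesssim\log^3 n/n$ is then taken from \cite{GNT17Local}, where it is obtained by feeding the Stieltjes local law into the smoothing inequality of \cite{GT03RateSemicircular}. Your resolvent-integral decomposition bypasses the smoothing inequality entirely and is closer to the Tao--Vu style; the paper's path has the advantage that the Kolmogorov bound is already worked out in \cite{GNT17Local}, so the proposition becomes a two-line assembly rather than a new dyadic estimate.

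Two repairs are needed in your sketch. First, the integral identity carries a spurious factor $y$: from $\log\abs{x}=\tfrac12\log(x^2+T^2)-\int_0^T y(x^2+y^2)^{-1}dy$ one gets $-\int_0^T\Im\bigl(s_n^z(iy)-s_\infty^z(iy)\bigr)dy$, not $\int_0^T y\,\Im(\cdots)\,dy$; with the extra $y$ the contribution over $[y_0,T]$ would blow up like $T/n$. After the fix, (iii) indeed gives $\int_{y_0}^T\log^A n/(ny)\,dy\lesssim\log^{A+1}n/n$ as you intend. Second, under condition (C) only $4+\delta$ moments are assumed, so the claim $\norm{\mathbf W}\lesssim 1$ with overwhelming probability is too strong; what is available, and what your argument actually needs for the $\tfrac12\log(x^2+T^2)$ term, is the polynomial bound $s_{\max}\le n^{B'}$ that the paper derives from the second moment.
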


We remark that the restriction to the bulk in Theorem \ref{thm:rateofconv} comes directly from the restriction in the previous Proposition \ref{prop:ConcLogPot}.

Since the statement of Proposition \ref{prop:ConcLogPot} is not explicitly worked out in \cite{GNT17Local}, we will derive it now, based on the results proved in this paper.
We will directly follow the approach of \cite{GNT17Local}, making use of Girko's Hermitization trick to convert the non-Hermitian problem into a Hermitian one, apply the local Stieltjes transform estimate from \cite{GNT17Local} and the smoothing inequality from \cite{GT03RateSemicircular}. Let
\begin{align*}
 m_n(z,\cdot ):\IC\setminus\IR\to\IC,\qquad w\mapsto\int_{\IR}\frac{1}{w-t}d\tilde\nu^z_n(t)
\end{align*}
be the Stieltjes transform of $\tilde\nu_n^z$, which converges a.s. to the solution of 
\begin{align}\label{eq:StieltjesEquation}
s(z,w)=-\frac{w+s(z,w)}{(w+s(z,w))^2-\abs z ^2}
\end{align}
see for instance \cite{GT10Circular}. It is known that $s(z,\cdot )$ is the Stieltjes transform of a measure $\tilde \nu ^z$, which appears to be the weak limit of $\tilde\nu^z_n$ as $n\to\infty$, cf. \cite{GNT17Local}. Moreover, $\tilde \nu ^z$ has a symmetric bounded density $\rho^z$ (the bound holds uniformly in $z$) and has compact support. We refer to \cite[\S 4]{BYY14Local} for more information on $s$ and $\rho^z$.
Note that $s$ will be unbounded from below for $w$ close to the origin and $z$ close to the edge of the support of $\mu^m_\infty$, which is the reason for the bulk constraint of Proposition \ref{prop:ConcLogPot}.

\begin{proof}[Proof of Proposition \ref{prop:ConcLogPot}]
Fix some arbitrary $Q,\tau>0$ and $z\in B_{1+\tau^{-1}}$ satisfying $\abs{1-\abs z }\ge \tau$. As is explained in Girko's Hermitization trick \eqref{eq:GirkoHerm}, we may write
\begin{align*}
 \abs{U_n(z)-U_\infty(z) }=\abs{\int_{\IR} \log \abs x d(\tilde\nu^z_n-\tilde\nu^z)(x)	}.
\end{align*}
Due to the singularities of $\log\abs{x}$ at $x=0$ and $x=\infty$, it is not enough to simply control the convergence of $\tilde\nu^z_n\to\tilde\nu^z$, but it is also necessary to estimate the extremal singular values. The rate of convergence of $\tilde\nu^z_n$ to $\tilde\nu^z$ shall be measured in Kolmogorov distance $$ d_n^*(z)=\sup_{x\in\IR}\abs{(\tilde\nu_n^z-\tilde\nu^z)(-\infty,x]}.$$ Introduce the events
\begin{align*}
 \Omega_0:=\{s_{\min }\geq n^{-B}  \},\quad \Omega_1:=\{s_{\max }\leq n^{B'}  \},\quad \Omega_2:=\{  d_n^*(z)\leq c \log^3n/n\}
\end{align*}
for some constants $B,B',c>0$ yet to be chosen. From \cite[Theorem 31]{OS11} it follows that there exists a constant $B>0$ such that $\IP(\Omega_0^c)\lesssim n^{-Q}$. Moreover for any $Q>0$ it holds
\begin{align}\label{eq:s_max}
 \IP(s_{\max }\geq n^{(Q+1)/2})&\leq \frac {\IE\norm{(\mathbf W-z)}^2} {n^{Q+1}}\\
 &\leq \frac 1 {n^{Q+1}}\sum_{ij}^{nm}\IE\abs{\mathbf W_{ij}}^2+\frac {\abs z^2} {n^{Q}}\le\big(m +\tau^{-1}\big)n^{-Q},\nonumber
\end{align}
where the operator norm $\norm\cdot$ has been estimated by the Hilbert Schmidt norm. Thus, there exists a constant $B'>0$ with $\IP(\Omega_1^c)\lesssim n^{-Q}$. Since $\tilde\nu^z$ has a bounded density, we get
\begin{align*}
 \abs{\int_{-n^{-B}}^{n^{-B}}\log \abs x d\tilde\nu^z(x)}\lesssim \frac{\log n}{ n^{B}}
\end{align*}
and furthermore on $\Omega_2$ it holds that
\begin{align*}
 \abs{\int_{n^{-B}\leq \abs x\leq n^{B'}}\log\abs xd(\tilde\nu_n^z-\tilde\nu ^z )(x)}\lesssim   d^*_n(z)\log n\lesssim \frac{\log ^4n}{n}.
\end{align*}
Hence, the claimed concentration of $U_n$ holds on $\Omega_0\cap\Omega_1\cap\Omega_2$, implying
\begin{align*}
 \IP\left( \abs{U_n(z)-U_\infty(z) } \geq c \frac{\log ^4 n}{n}\right)\leq \IP(\Omega_0^c)+\IP(\Omega_1^c)+\IP(\Omega_2^c).
\end{align*}
It remains to check $\IP(\Omega_2^c)\leq n^{-Q}$, which has been done explicitly in \cite[(4.14)-(4.16)]{GNT17Local}, using the Smoothing Inequality \cite[Corollary B.3]{GNT17Local} (originally obtained in \cite{GT03RateSemicircular}) and the Local Law for $\mathbf V(z)$ in terms of its Stieltjes transform.
\end{proof}

The core of the proof of the local law for products of non-Hermitian matrices in \cite{GNT17Local} is the following identity. First, for any function $f\in\mathcal C ^2_c(\IC)$ define $\tilde f$ by $\tilde f (z)=f(z^m)$ and note that $\int fd\mu_\infty ^m=\int \tilde fd\mu_\infty ^1$, which follows from the definition of $\mu_\infty^m$ in \eqref{eq:LimitProduct}. Using the distributional Poisson equation \eqref{eq:distrPoisson} as usual in non-Hermitian random matrix theory and the representation of the eigenvalues of $\mathbf W$, we get
\begin{align}\label{eq:LocalLawIdentity}
 \int f d(\mu_n^m-\mu^m_\infty)=\frac 1 {nm}\sum_{j=1}^{nm}\tilde f (\lambda_j(\mathbf W))-\int\tilde fd\mu_\infty ^1=-\frac{1}{2\pi}\int\Delta \tilde f \left(U_n- U_\infty\right)d\lebesgue.
\end{align}

In Local Laws \cite{nemish,GNT17Local}, locally shrinking functions $f_{z_0}$ have been considered and for fixed global $f$ Gaussian fluctuation has been proven in \cite{KOV18}. As was done in \cite{GJ18Rate}, we would like to uniformly approximate all indicator functions $\eins_{B_R(z_0)}$ by smooth functions, replace the right hand side of \eqref{eq:LocalLawIdentity} by a discrete random sum and use the pointwise estimate from Proposition \ref{prop:ConcLogPot}. In contrast to \cite{GJ18Rate}, we cannot use the smoothing inequality \cite[Theorem 2.1]{GJ18Rate}, since it would rely on the distance of $U_\infty$ to the logarithmic potential of the matrix $\mathbf X$, while instead we can control the logarithmic potential $U_n$ of the linearization matrix $\mathbf W$ only. Therefore we will use a direct approach. In this case, we can replace the Monte Carlo approximation of the integral \eqref{eq:LocalLawIdentity} by a random grid approximation. This also yields a more precise rate of convergence in Theorem \ref{thm:rateofconv}. Here, and in the sequel we will identify $\IR^2\simeq\IC$ via $(u,v)\mapsto u+iv$ in order to simplify notation.

\begin{lem}\label{lem:gridapprox}
Let $\alpha,\beta>0$ and $S$ be a random variable uniformly distributed on $[0,1]^2$. Fix complex numbers $\lambda_1,\dots,\lambda_n\in\IC$ with corresponding logarithmic potential $U$ of their empirical distribution $\mu_n=\tfrac 1 n\sum_{j=1}^n\delta_{\lambda_j}$. Furthermore, define the random grid $A=2\beta n^{-\alpha/2}(\IZ^2+S)\cap [-\beta,\beta]^2$ enumerated by $z_1,\dots,z_{\lceil n^{\alpha}\rceil}$. For any function $f\in\mathcal C ^3_c(\IC)$ with $\supp f\subseteq (-\beta,\beta)^2$ it holds
\begin{align}\label{eq:gridapprox}
 \frac 1 n \sum_{j=1}^n f(\lambda_j) - \frac{-2\beta^2}{n^{\alpha}\pi}\sum_{i=1}^{\lceil n^{\alpha}\rceil}\Delta f(z_i)U(z_i)=\mathcal O\big(\big(\norm{\nabla \Delta f }_\infty+\norm{\Delta f }_\infty \log^2 n \big)n^{-\alpha/2}\big)
\end{align}
with overwhelming probability. Moreover, if $S$ is chosen independently of the random matrix $\mathbf W$, then \eqref{eq:gridapprox} for $U=U_n$ and
\begin{align*}
\sup_{i}\abs{U_n(z_i)}=\mathcal O (\log^2n)
\end{align*}
hold on an event $\Omega_\ast$ of probability $1-\mathcal O(n^{-\log n})$, which does not depend on $f$.
\end{lem}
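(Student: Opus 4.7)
By the distributional Poisson equation \eqref{eq:distrPoisson} and the fact that $\supp f\subseteq(-\beta,\beta)^2$, integration by parts furnishes the exact identity
$$\frac{1}{n}\sum_{j=1}^{n}f(\lambda_j)=\int f\,d\mu_n=-\frac{1}{2\pi}\int_{[-\beta,\beta]^2}\Delta f(z)\,U(z)\,d\lebesgue(z).$$
Setting $h:=2\beta n^{-\alpha/2}$ one sees that the claimed prefactor equals $-h^2/(2\pi)$, so the task is to estimate the plain Riemann-sum error
$$E=\int_{[-\beta,\beta]^2}\Delta f\cdot U\,d\lebesgue\;-\;h^2\sum_i\Delta f(z_i)\,U(z_i)$$
on cells $C_i$ of side $h$ centred at the grid points. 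The natural telescoping
$$\Delta f(z)U(z)-\Delta f(z_i)U(z_i)=(\Delta f(z)-\Delta f(z_i))U(z)+\Delta f(z_i)(U(z)-U(z_i))$$
splits $E=E_1+E_2$. For $E_1$ use $|\Delta f(z)-\Delta f(z_i)|\le h\|\nabla\Delta f\|_\infty$ together with the uniform bound on $\int_{[-\beta,\beta]^2}|U|\,d\lebesgue$, which is $\mathcal O(\log n)$: log is locally $L^1$, and distant eigenvalues contribute at most $\log|\lambda_j|\lesssim\log n$ on the confinement event $\{s_{\max}(\mathbf W)\le n^{B'}\}$ from \eqref{eq:s_max}. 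This gives $|E_1|=\mathcal O(\|\nabla\Delta f\|_\infty n^{-\alpha/2})$.

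For $E_2$ I would unfold $U(z)-U(z_i)=-\frac{1}{n}\sum_j(\log|z-\lambda_j|-\log|z_i-\lambda_j|)$, apply Fubini, and estimate for each fixed $\lambda_j$ the quantity $\sum_i\int_{C_i}\bigl|\log|z-\lambda_j|-\log|z_i-\lambda_j|\bigr|\,d\lebesgue$. Cells at distance at least $h$ from $\lambda_j$ contribute $\mathcal O(h)$, because the gradient bound $|\nabla\log|\cdot-\lambda_j||=1/|\cdot-\lambda_j|$ is integrable in two dimensions; for the $\mathcal O(1)$ nearby cells, the piece $\int|\log|z-\lambda_j||\,d\lebesgue$ is harmless of order $h^2|\log h|$, whereas the constant term $h^2|\log|z_i-\lambda_j||$ is dangerous and is exactly where the random shift $S$ becomes essential.

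Since $\lambda_j$ modulo the lattice $2\beta n^{-\alpha/2}(\IZ^2+S)$ is uniformly distributed in a single cell, a union bound over the $\mathcal O(n^{1+\alpha})$ pairs $(i,j)$ yields
$$\IP\!\Big(\min_{i,j}|z_i-\lambda_j|<\delta\Big)\;\lesssim\;n^{1+\alpha}\delta^2/h^2.$$
Choosing $\delta:=\exp(-c\log^2 n)$ for suitably large $c$ makes the right side $\mathcal O(n^{-\log n})$ while forcing $|\log\delta|\lesssim\log^2 n$, so on this event every dangerous constant term is $\mathcal O(h^2\log^2 n)$, producing $|E_2|\lesssim\|\Delta f\|_\infty(h+h^2\log^2 n)\lesssim\|\Delta f\|_\infty\log^2 n\cdot n^{-\alpha/2}$ and matching the stated error. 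For the second claim the same exceptional event $\Omega_\ast$ does the job: $\sup_i|U_n(z_i)|\le\frac{1}{nm}\sum_j|\log|z_i-\lambda_j(\mathbf W)||$, where the gap $|z_i-\lambda_j(\mathbf W)|\ge\delta$ controls each term by $c\log^2 n$, while the large-distance contribution is $\mathcal O(\log n)$ since all eigenvalues of $\mathbf W$ lie in a ball of radius $n^{B'}$ by \eqref{eq:s_max}. The main obstacle is precisely the trade-off behind the random shift: forcing probability $1-\mathcal O(n^{-\log n})$ in the union bound compels $\delta$ to be exponentially small in $\log^2 n$, and this is exactly what dictates the $\log^2 n$ factor appearing in the final error.
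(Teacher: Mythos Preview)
Your proof is correct and follows essentially the same route as the paper's: integration by parts via the Poisson equation, the telescoping $E=E_1+E_2$, the mean-value estimate for $E_1$, and the random-shift argument controlling $\min_{i,j}|z_i-\lambda_j|$ with the same choice $\delta\asymp n^{-c\log n}$ that forces the $\log^2 n$ factor. The only cosmetic differences are that the paper reduces first to a single fixed $\lambda$ (taking a union bound over the $n$ values rather than your cruder one over all $n^{1+\alpha}$ pairs) and uses the near/far threshold $n^{-\alpha/4}$ instead of your $h$; note also that the confinement event from \eqref{eq:s_max} is needed only for the second claim about $U_n$, not in the first part where the $\lambda_j$ are fixed and $\int_{[-\beta,\beta]^2}|U|\,d\lebesgue$ is just a constant absorbed into the implied $\mathcal O$.
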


%In the proof of Theorem \ref{thm:rateofconv} we will apply this Lemma to the eigenvalues $\lambda_j$ of $\mathbf W$.
Note that \eqref{eq:gridapprox} holds uniformly in $f\in\mathcal C ^3_c(\IC)$, hence we could choose a function depending on the positions of $\lambda_j$. In order to make the statement more intuitive, suppose we replace the logarithmic potential $U$ by a more regular function $U\in\mathcal C^1$. Then \eqref{eq:gridapprox} is nothing but Riemann approximation of the integral
\begin{align}\label{eq:RiemannApprox}
 \int \Delta f(z)U(z)d\lebesgue (z)-\frac{(2\beta)^2}{n^\alpha}\sum_{i=1}^{\lceil n^{\alpha}\rceil}\Delta f(z_i)U(z_i) \lesssim\left(\norm{\nabla \Delta f }_\infty+\norm{\Delta f }_\infty\right) n^{-\alpha/2}.
\end{align}
This follows directly from the mean value theorem, very similarly to what we will do in \eqref{eq:MVT} below.

In the Monte Carlo approximation used in \cite{TV15uni} and \cite{KOV18}, the random points $z_i$ are not ordered in a grid but drawn independently, thus variance bounds are of importance for improving bounds as \eqref{eq:gridapprox}. By using reference points or eigenvalue rigidity, the error estimates in \cite{TV15uni} and \cite{KOV18} are stronger by a factor of $1/n$ for the same number of points $z_i$. On the other hand, in order to control the singularities of $U_n$, one has to handle many random effects of all $z_i$, whereas in \eqref{eq:gridapprox} only a single random shift affects all points $z_i$. Heuristically speaking, this leads to a higher probability than in previous approaches, so that the weaker error bound is negligible. 

\begin{proof}[Proof of Lemma \ref{lem:gridapprox}]
Using the definition \eqref{eq:logPot} via \eqref{eq:distrPoisson}, in other words integration by parts, we find
\begin{align*}
  \frac 1 n \sum_{j=1}^n f(\lambda_j)=\int f d\mu_n=-\frac1{2\pi}\int\Delta f(z)U(z)d\lebesgue(z).
\end{align*}
It suffices to show that with probability at least $1-n^{-\log n-1}$ we have
\begin{align}\label{eq:fixedlambda}
 \int\Delta f(z)\log\abs{\lambda-z}d\lebesgue(z)-\frac{4\beta^ 2}{n^{\alpha}}\sum_{i=1}^{\lceil n^{\alpha}\rceil}\Delta f(z_i)\log\abs{\lambda-z_i}\nonumber\\
=\mathcal O\big(\big(\norm{\nabla \Delta f }_\infty+\norm{\Delta f }_\infty \log^2 n \big)n^{-\alpha/2}\big)
\end{align}
for fixed $\lambda\in\IC$, since the claim then follows from summation over $j$ and the union bound.

In the following, we will show that the event, where \eqref{eq:fixedlambda} holds, will be $\tilde\Omega_\ast=\bigcap_{z_i\in A}\{\abs{z_i-\lambda}> 2 \beta n^{-(\log n+1+\alpha)/2}\}$ which fails if $\lambda$ is too close to the random grid $A$. More precisely let $z^\ast\in 2\beta n^{-\alpha/2}\IZ^2\cap[-\beta,\beta]^2$ be the corner of a box such that $\lambda\in z^\ast+[0,2\beta n^{-\alpha/2}]^2$, then
\begin{align*}
 \IP(\tilde\Omega_\ast^c)&=\IP\left(\exists i=1,\dots,{\lceil n^{\alpha}\rceil}:\abs{z_i-\lambda}\le2\beta n^{-(\log n+1+\alpha)/2}\right)\\
 &=\IP\left(\text{dist}\Big( S,\frac{\lambda-z^\ast}{2\beta n^{-\alpha/2}}\Big) \le n^{(-\log n-1)/2}\right)=\mathcal O(n^{-\log n-1}), 
\end{align*}
where the distance in $[0,1]^2$ is measured according to the metric of the quotient space $\mathbb T ^2$. The situation is illustrated in Figure \ref{fig:grid}.

\begin{figure}[h]
 \begin{tikzpicture}
\draw[gray,line width=.6pt,<->] (3.5,2.7) -- (3.7,3.3);
\draw[gray,line width=.6pt,->] (0.6,0) -- (.7,.3);
\draw[gray,line width=.6pt,<-] (0.5,1.2) -- (.6,1.5);
\foreach \y in {0,1.5,3,4.5}{
\foreach \x in {0,1.5,3,4.5}{
  \begin{scope}[shift={(.5,1.2)}]
  \draw[gray, line width=0.5pt, dotted] (-.7,\y) -- (4.9,\y);
\draw[gray, line width=0.5pt, dotted] (\x,-1.4) -- (\x,4.6);
\filldraw[gray] (\x,\y) circle (.7pt);
  \end{scope}
\draw[line width=0.5pt] (-.5,\y) -- (5,\y);
\draw[line width=0.5pt] (\x,-.5) -- (\x,5);
\filldraw (\x,\y) circle (1pt);
}}
\filldraw (.5,1.2) circle (1pt);
\node[below left] at (0,0) {$(0,0)$};
\node[left] at (-.5,2.25) {$2\beta n^{-\alpha/2}$};
\draw[decoration={brace},decorate]  (-.4,1.55) -- (-.4,2.95);
\draw[white,line width=0.7pt] (0,.7) -- (0,1.25);
\node at (0,.92) {$\tfrac{2\beta}{n^{\alpha/2}}S$};
\node[below right] at (4.9,5.7) {$A$};
\node[above left] at (3.1,2.95) {$z^*$};
\node at (3.7,3.3) {\footnotesize $\null ^\times$};
\node[above] at (3.7,3.3) {$\lambda$};
\node at (.7,.3) {\footnotesize $\null^\times$};
\node[above right] at (.4,.27) {$\lambda-z^\ast$};
%\draw[gray, line width=0.5pt, dotted] (2.5,0) -- (4,0);
%\filldraw [gray] (0,2.2) circle (1pt);
%\node[below left] at (0,0) {0};
%\node[below] at (1,0) {$\tfrac1{nR^{\frac2m}}$};
\end{tikzpicture}
\caption{The distance of $\lambda$ to random grid $A$ is illustrated by the gray doubled arrow. This is equal to the distance of the rescaled random shift $2\beta n^{\alpha/2}S$ to $\lambda-z^\ast$ (measured in the quotient space), where $z^\ast$ is the reference corner of the box containing $\lambda$.}
\label{fig:grid}
\end{figure}

From now on we will restrict ourselves to this event $\tilde\Omega_\ast$. Rewrite \eqref{eq:fixedlambda} as
\begin{align*}
\sum_{i=1}^{\lceil n^{\alpha}\rceil} \int_{K_i}\Delta f(z)\log\abs{\lambda-z}-\Delta f(z_i)\log\abs{\lambda-z_i} d\lebesgue(z),
\end{align*}
where we denoted the boxes with corner $z_i$ by $K_i=z_i+[0,2\beta n^{-\alpha/2}]^2$. Adding and substracting $\Delta f(z_i)\log\abs{\lambda -z}$, we obtain one error of order
 \begin{align}\label{eq:MVT}
 \sum_{i=1}^{\lceil n^{\alpha}\rceil} \int_{K_i}(\Delta f(z)-\Delta f(z_i))\log\abs{\lambda-z}d\lebesgue(z)=\mathcal O(\norm{\nabla \Delta  f}_\infty n^{-\alpha/2}),
\end{align} 
where we used the mean value theorem and local integrability of $\log$ in $\IC$. The second term can be bounded by
\begin{align*}
\Bigg\lvert\sum_{i=1}^{\lceil n^{\alpha}\rceil} &\int_{K_i}\Delta f(z_i)\left(\log\abs{\lambda-z}-\log\abs{\lambda-z_i}\right)d\lebesgue(z)\Bigg\rvert\\
 &\leq\norm{\Delta f}_\infty\bigg(\sum_{i:\abs{z_i-\lambda}\ge n^{-\frac{\alpha}4}}+\sum_{i:\abs{z_i-\lambda}< n^{-\frac{\alpha}4}}\bigg)\int_{K_i}\abs{\log\abs{\lambda-z}-\log\abs{\lambda-z_i}}d\lebesgue(z).
\end{align*}
Applying the mean value theorem for $\log$, yields a bound of order $\mathcal O (n^{-\alpha/4})$ for the first sum. The second sum can be bounded by performing the integration in polar coordinates
\begin{align*}
 \int_0^{2n^{-\alpha/4}} r\big(\log r +\sup_{z_i\in A}\log \abs{\lambda-z_i}\big)dr=\mathcal O (n^{-\alpha/2}\log^2 n),
\end{align*}
where we finally used the event $\tilde\Omega_\ast$. Putting all estimates together proves the first claim.

The bound for $U_n$ follows from the choice of $A$ and a trivial upper bound on the spectral radius $\abs\lambda_{\max } $ of $\mathbf W$. On the one hand $\abs\lambda_{\max } $ is bounded by the largest singular value $s_{\max } $ and on the other hand we have
\begin{align*}
 \IP(s_{\max }\geq n^{\log n})\leq \IE\norm{(\mathbf W-z)}^2n^{-2\log n}%\leq \frac 1 {n^{2\log n+1}} \sum_{ij}^n\IE\abs{\mathbf W_{ij}}^2
 \lesssim n^{-2\log n+1},
\end{align*}
similar to \eqref{eq:s_max}. Therefore on the event $\Omega_\ast=\tilde\Omega_\ast\cap\{s_{\max}\le n^{\log n}\}$ with probability at least $1-\mathcal O(n^{-\log n})$, we have \eqref{eq:gridapprox} and
\begin{align*}
 \sup_{z_i\in A}\abs{U_n(z_i)}&\le \frac 1 n\sum_{j=1}^ n\sup_{z_i\in A}\big|\log\abs{\lambda_j-z_i}\big|\\
 &\lesssim (\log n+1+\alpha)\log n+\log\big|\abs\lambda_{\max }+5\big|=\mathcal O(\log^2 n).
\end{align*}
\end{proof}

Finally, we turn to the

\begin{proof}[Proof of Theorem \ref{thm:rateofconv}]
First, note that we only need to consider $\tau<1$. In order to restrict ourselves to a bounded region, say $V=B_7(0)$, we separate
\begin{align}\label{eq:first}
&\sup_{\substack{B_R(z_0)\subseteq \IC\setminus B_{1+\tau}\\ \text{or }B_R(z_0)\subseteq B_{1-\tau}}}\abs{ (\mu_n^m-\mu_\infty^m)(B_R(z_0))}\nonumber \\ \le &\sup_{\substack{B_R(z_0)\subseteq \IC\setminus B_{1+\tau}\\ \text{or }B_R(z_0)\subseteq B_{1-\tau}}}\abs{ (\mu_n^m-\mu_\infty^m)(B_R(z_0)\cap V)} +\mu_n^m(V^c).
 \end{align} 
%By covering $V^c$ with three sufficiently large balls, $\mu_n^m(V^c)=(\mu_n^m-\mu_\infty^m)(V^c)$ is again bounded by $3$ times (say) the left hand side of \eqref{eq:first}, hence we only have to estimate this first term. 

Fix some $\tau,R>0, z_0\in\IC$ such that $B_R(z_0)\subseteq B_{1-\tau}\cup B_{1+\tau}^c$. 

Let $\phi\in\mathcal{C}^\infty(\IR)$ be nonnegative with $\supp \phi \subseteq [-1,1]$ and $\int\phi=1$, and define $\phi_a(\rho)=a\phi(a\rho)$ for some large $n$-dependent parameter $a>1$ to be determined later. We define the cutoff to $V^c$ by 
\begin{align*}
 f_0(z)= \left(\eins_{(7-1/a,\infty)}\ast\phi_{a}\right)(\abs{z})\ge\eins_{V^c}(z) .
\end{align*}
Moreover, we mollify the indicator function $\eins_{B_R(z_0)\cap V}$ appearing in \eqref{eq:first} via the approximation
\begin{align*}
 f_1(z):&=\left(\eins_{(-\infty, R-1/a]}\ast\phi_{a}\right)(\abs{z-z_0} )\cdot \left(\eins_{(-\infty,7-1/a]}\ast\phi_{a}\right)(\abs{z})\\
 &\leq\eins_{B_R(z_0)\cap V}(z)\\
 &\leq\left(\eins_{(-\infty,R+1/a]}\ast\phi_a\right)(\abs{z-z_0} )\cdot \left(\eins_{(-\infty,7+1/a]}\ast\phi_{a}\right)(\abs{z} )=:f_2( z),
\end{align*}
where we choose $f_1\equiv 0$ if $R\le 2/a$ for smoothness reasons. 

We apply $f_1\leq \eins_{B_R(z_0)\cap V}$ and integration by parts to $\tilde f_1:z\mapsto f_1(z^m)$ as was explained in \eqref{eq:LocalLawIdentity} to obtain
\begin{align}\label{eq:3terms}
&\mu_n^m(B_R(z_0)\cap V)\geq\int f_1d\mu_n^m=-\frac{1}{2\pi}\int(\Delta \tilde f_1) U_n d\lebesgue\nonumber\\
&=-\frac{1}{2\pi}\int\Delta \tilde f_1 (U_n-U_\infty) d\lebesgue-\int(\eins_{B_R(z_0)\cap V}-f_1)d\mu_\infty^m+\int \eins_{B_R(z_0)\cap V}d\mu_\infty^m.
\end{align}
Analogous upper bounds hold for $f_0$ and $f_2$. A rough estimate of the error of approximation yields for the second term 
\begin{align}\label{eq:MaxMeasure}
 \int(\eins_{B_R(z_0)\cap V}-f_1)d\mu_\infty^m&\leq  \mu_\infty^m\left(z\in \IC: R-2/a\leq \abs{z-z_0 }\leq R\right) .
\end{align}
Due to the radial monotonicity of $\mu_\infty ^m$'s density, this value increases by bending segments of the given annulus of width $2/a$ into straight rectangles $[-1,1]\times[-4/a,4/a]$ for some constant $c>0$. Let us provide some more details on how a less technical, non-continuous bending procedure can be achieved: 

The distribution of the annulus \eqref{eq:MaxMeasure} is bounded by 10 times the maximal probability of an annulus segment with outer circumferential length $2\pi R/10\wedge 2$. By radial monotonicity of $\mu_\infty ^m$, this particular "worst case" $1/10$-th segment $S$ is given by one that is closest to the origin and by radial symmetry we assume it to be symmetric with respect to the $x$-axis. We want to bend (or rather project) $S$ onto the $y$-axis. Note that the angle between the $y$-axis and any tangent to the circumference line of the segment $S$ is always bounded by $\pi/10$. Split our segment $S$ in at most $ca$ many pieces each of outer circumferential length $10/a$ and shift them in $x$-direction until they touch the $y$-axis (i.e. "projecting" the pieces). This transformation only increases the probability since we may only decrease the radial part of all points in each piece. Now since the mentioned angle is bounded, at most a constant number of the shifted segments overlap. One can show that this constant is actually equal to 2, or in other words non-neighboring shifted pieces are disjoint. Therefore, the total probability in \eqref{eq:MaxMeasure} is bounded by $20\mu_\infty^m([-1,1]\times [10/a,10/a])$.

The density is bounded in the case of $m=1$ and hence the term in \eqref{eq:MaxMeasure} is of order $\mathcal O (1/a)$. In general we can bound it by
\begin{align*}
\mu_\infty^m([-1,1]\times[-c/a,c/a])&\leq\frac 1 {\pi m}\int_{-1}^1\int_{-c/a}^{c/a}(x^2+y^2)^{1/m-1}dxdy\\
&\leq\frac {4c} {\pi m a}\int_{c/(2a)}^1 x^{2/m-2}dx+\frac 2m\int_0^{4c/a}r^{2/m-1}dr\\
&=\frac 8 {\pi(m-2)}\left(\Big(\frac c {2a}\Big)^{2/m}-\Big(\frac c {2a}\Big) \right)+\Big(\frac{4c} {a}\Big)^{2/m}\lesssim a^{-2/m},
\end{align*}
where the equation only holds for $m>2$. For $m=2$ we get 
\begin{align*}
  &\mu_\infty^2([-1,1]\times[-c/a,c/a])\leq\frac 1 {2\pi}\int_{-1}^1\int_{-c/a}^{c/a}(x^2+y^2)^{-1/2}dxdy\\ 
  &=\frac {c}{\pi a}\log\left(\frac{\sqrt{a^2+c^2}+a}{\sqrt{a^2+c^2}-a}\right)+\frac 2 \pi\log\left(\sqrt{1+c^2/a^2}+c/a\right)\\
  &\sim a^{-1}\log a
\end{align*}
and we see that the $\log$-term appears naturally. Define the error function \begin{align}\label{eq:RingeRest}
\tilde h_m(a)=\begin{cases}
        \mathcal{O}(a^{-1})&\text{ for }m=1,\\
        \mathcal{O}(a^{-1}\log a)&\text{ for }m=2,\\
        \mathcal{O}(a^{-2/m})&\text{ for }m\ge 3.\\
        \end{cases}
\end{align}
Let us continue to estimate the first term of \eqref{eq:3terms} by using our random grid approximation Lemma \ref{lem:gridapprox}. Let $\beta=7$ and $S$ be a random variable, independent of $\mathbf X$ and uniformly distributed on $[0,1]^2$. Conditioned on $\mathbf X$, we have with overwhelming probability
\begin{align*}
 \int\Delta \tilde f_1(z) U_n(z) d\lebesgue(z)-\frac{(2\beta)^2}{n^\alpha}\sum_{i=1}^{\lceil n^{\alpha}\rceil}\Delta \tilde f(z_i)U_n(z_i)\\
 =\mathcal O\big(\big(\lVert\nabla \Delta \tilde f_1 \rVert_\infty+\lVert\Delta \tilde f_1 \rVert_\infty \log^2 n \big)n^{-\alpha/2}\big)
\end{align*}
Due to our explicit choice of functions $f_1$ and $f_2$ as product of shifted radial symmetric functions, the partial derivatives become fairly simple. Each derivative that hits one of the $\phi_a$ produces a factor of $a$, more precisely any $k$-th directional derivative satisfies $\lVert\partial^{(k)} f_1(z)\rVert_\infty\lesssim a^k$. This estimate, again, is independent on the choice of the ball $B_R(z_0)$.

Together with the Riemann approximation \eqref{eq:RiemannApprox}, we conclude that for any matrix $\mathbf X$ we have with overwhelming probability
\begin{align*}
 \sup_{B}\Big\lvert\int\Delta \tilde f_1(z) (U_n(z)-U_\infty(z)) d\lebesgue(z)&-\frac{(2\beta)^2}{n^\alpha}\sum_{i=1}^{\lceil n^{\alpha}\rceil}\Delta \tilde f_1(z_i)\left(U_n(z_i)-U_\infty(z_i)\right)\Big\rvert\\
&=\mathcal O\big((a^3+a^2\log^2 n) n^{-\alpha/2}\big),
\end{align*}
where the supremum runs over all choices of $B\subseteq B_{1-\tau}\cup B_{1+\tau}^c$. Since we will always choose $a\lesssim n$ (actually we will make it even smaller, cf. \eqref{eq:choosea}), it is possible to freely choose $\alpha>0$ sufficiently big such that the error is arbitrarily small. For instance $\alpha =13$ is more than enough to ensure that the error is of order $\mathcal O(n^{-1})$. It should be emphasized that still no randomness of $\mathbf X$ has been used and the only randomness is the shifted grid. Combining the previous steps yields
\begin{align*}%\label{eq:Losing}
(\mu_n^m-\mu_\infty^m)(B\cap V)\geq -\frac{(2\beta)^2}{2\pi n^\alpha}\sum_{i=1}^{\lceil n^{\alpha}\rceil}\Delta \tilde f_1(z_i)\Big(U_n(z_i)-U_\infty(z_i)\Big)-\tilde h_m(a)-\mathcal O (n^{-1})
\end{align*}
uniformly in $B\subseteq B_{1-\tau}\cup B_{1+\tau}^c$ with overwhelming probability. Noting $\mu_n^m(B_R(z_0)\cap V)\leq\int f_2d\mu_n^m$ and taking the same route for $f_2$ as for $f_1$, we obtain the same upper bound
\begin{align*}
(\mu_n^m-\mu_\infty^m)(B\cap V)\leq -\frac{(2\beta)^2}{2\pi n^\alpha}\sum_{i=1}^{\lceil n^{\alpha}\rceil}\Delta \tilde f_2(z_i)\Big(U_n(z_i)-U_\infty(z_i)\Big)+\tilde h_m(a)+\mathcal O (n^{-1}).
\end{align*}
In the same way, the bound holds for $\mu_n^m(V^c)$ as well. Finally we use the randomness of $\mathbf X$ by applying Proposition \ref{prop:ConcLogPot}. Conditioning on $S$, i.e. freezing the lattice points $z_i$, we obtain for any $Q>0$
\begin{align*}
 \IP\left( \abs{ U_n(z_i)-U_\infty(z_i)}\geq c \frac{\log^4(n)}{n}\Big\vert S\right)\leq n^{-Q-\alpha}
\end{align*}
for each $i=1,\dots,\lceil n^{\alpha}\rceil$. By the union bound this implies that with probability at least $1-n^{-Q}$ the logarithmic potentials concentrate like $U_n(z_i)-U_\infty(z_i)=\mathcal O( \log^4 n /n)$ simultaneously at all lattice points. Therefore, for $k=0,1,2$,
\begin{align*}
 \frac{(2\beta)^2}{2\pi n^\alpha}\sum_{i=1}^{\lceil n^{\alpha}\rceil}\Delta \tilde f_k(z_i)\Big(U_n(z_i)-U_\infty(z_i)\Big)&\lesssim \frac{(2\beta)^2\log ^4n}{n^{1+\alpha}}\sum_{i=1}^{\lceil n^{\alpha}\rceil}\big\lvert\Delta \tilde f_k(z_i)\big\rvert\\
 &=\frac{\log ^4 n}{n}\big\lVert\Delta \tilde f_k\big\rVert_{L^1}+\mathcal O \Big(a^3\frac{\log^4 n}{n^{\alpha/2+1}}\Big)
\end{align*}
where the integral of the $a^3$-Lipschitz function $\lvert\Delta \tilde f_k\rvert$ has been approximated by its Riemann sum. Write $\Delta=4\bar\partial\partial$ in terms of the Wirtinger derivatives $\partial=\frac12(\partial_x-i\partial_y)$ and $\bar\partial =\frac12(\partial_x+i\partial_y)$. Since $g(z)=z^m$ is holomorphic, i.e. $\bar\partial g=0$, we obtain by applying the chain rule and changing variables from $z$ to $g(z)$
\begin{align*}
 \lVert\Delta  \tilde f_k\big\rVert_{L^1}= \lVert4\bar\partial\partial  (f_k\circ g)\big\rVert_{L^1}=4\lVert(\bar\partial\partial  f_k)\circ g\cdot \bar\partial\bar g\cdot\partial g )\big\rVert_{L^1}=\lVert\Delta  f_k\big\rVert_{L^1}.
\end{align*}
%\footnote{Note that the $L^1$ norm of the Laplacian is exactly one part of the constant appearing in local circular laws.}  
Since $\Delta  f_k\lesssim a^2$ and has support on an area of order $a^{-1}$, we have
\begin{align*}
 \sup_B\big\lVert\Delta  f_k\big\rVert_{L^1}\lesssim a.
\end{align*}
So overall we have proven that for all $Q$ there exists a constant $c>0$ such that with probability $1-n^{-Q}$ we have
\begin{align}\label{eq:choosea}
 \sup_{B \subseteq B_{1-\tau}\cup B_{1+\tau}^c}\abs{(\mu^m_n-\mu^m_\infty)(B)}\leq c a\frac{\log^4 n}{n} +\tilde h_m(a) +\mathcal O (n^{-1}).
\end{align}
Optimizing in $a$ yields $a=\sqrt n/\log^2n$ for $m=1$, as well as $h_2(n)=\log^3n/\sqrt n$. The asymptotic $h_m(n)$ for higher $m$ follows from choosing $a=n^{m/m+2}\log^{-4m/(m+2)}n$.

\end{proof}
 
In the proof we have seen that the maximal error for the limiting distribution $\mu^m_\infty$ is by balls, which touch the origin (technically these balls of growing size are not even admissible here). This yields the non-optimal rate in Theorem \ref{thm:rateofconv} if we do not exclude the origin. Having Theorem \ref{thm:ProductMeanRate} in mind however, we expect the maximizing ball to appear roughly at $B_1(0)$, where the error would be optimal again. 

\begin{proof}[Proof of Corollary \ref{cor:rateofconv}]
As long as the origin is avoided by a fixed distance $\tau$, the density of $\mu_{\infty}^m$ is bounded as in the case of $m=1$. Therefore, the only adjustment to the previous proof of Theorem \ref{thm:rateofconv} is that the error term in \eqref{eq:MaxMeasure} is now also given by $\tilde h_m(a)=\mathcal O (a^{-1})$. Since the remaining part of the proof remains untouched, Corollary \ref{cor:rateofconv} follows.
\end{proof}

\section*{Acknowledgements}
Financial support by the German Research Foundation (DFG) through the IRTG 2235 is gratefully acknowledged. The author would like to thank Thorsten Neuschel and Friedrich G\"otze for valuable suggestions, Mario Kieburg for helpful discussions regarding the saddle-point analysis and the referees for very useful feedback and remarks.

%\bibliography{bibliography_EJP}%copy bib into same folder! and comment bibliography and biblatex from the beginning!
%\bibliographystyle{alpha} 
%\printbibliography
\end{document}